\newtheorem{theorem}{Theorem}[section]
\newtheorem{lemma}[theorem]{Lemma}
\newtheorem{proposition}[theorem]{Proposition}
\newtheorem{corollary}[theorem]{Corollary}
\theoremstyle{remark}
\newtheorem{remark}[theorem]{Remark}
\theoremstyle{definition}
\newtheorem*{definition}{Definition}
\newtheorem{example}[theorem]{Example}
\newtheorem{ass}{Assumption}
\numberwithin{equation}{section}
\newcommand{\clr}{\textcolor{red}}
\newcommand{\NN}{{\mathbb N}}
\newcommand{\RR}{{\mathbb R}}
\newcommand{\CC}{{\mathbb C}}
\DeclareMathOperator\Ran{Ran}
\newcommand{\Dom}{{\operatorname{Dom}}}
\newcommand{\Ker}{{\operatorname{Ker}}}
\newcommand{\nl}{{\operatorname{nul}}}
\newcommand{\df}{{\operatorname{def}}}
\DeclareMathOperator\Real{Re}
\DeclareMathOperator\Imag{Im}
\renewcommand\Re{\Real}
\renewcommand\Im{\Imag}
\newcommand{\dist}{\operatorname{dist}}
\newcommand\la{\lambda}
\newcommand\wt{\widetilde}
\newcommand\wh{\widehat}
\newcommand\cA{{\mathcal A}}
\newcommand\cB{{\mathcal B}}
\newcommand\cH{{\mathcal H}}
\newcommand\sS{{\mathscr S}}
\newcommand\sC{{\mathscr C}}
\newcommand\dx{{\displaystyle{\frac{\d}{\d x}}}}
\newcommand\D{\partial}
\renewcommand\d{{\rm d}}
\newcommand{\ii}{{\rm i}}
\newcommand\ds{\displaystyle}
\newcommand{\ov}[1]{\overline{#1}}
\newcommand\mydot{\,\cdot\,}
\newcommand{\defeq}{\mathrel{\mathop:}=}
\newcommand\spt{\sigma_{\rm p}}
\newcommand\sess{\sigma_{\rm ess}}
\newcommand\sessreg{\sigma_{\rm ess}^{\rm \,r}}
\newcommand\sesssing{\sigma_{\rm ess}^{\rm \,s}}
\newcommand\sapp{\sigma_{\rm app}}
\newcommand\cl{\rm{cl}}
\begin{document}
\subjclass[2010]{47A10, 34L05, 47A55, 76E99} 

\keywords{Essential spectrum, singular matrix differential operator, Schur complement, Fredholm operator, approximate inverse, pseudo-differential operator, Douglis-Nirenberg ellipticity.}

\title[Essential spectrum of matrix differential operators]{Essential spectrum of non-self-adjoint \\singular matrix differential operators}

\author{Orif \,O.\ Ibrogimov}
\address[O.\,O.\ Ibrogimov]{%
	Mathematisches Institut, 
	Universit\"at Bern, 
	Alpeneggstr. 22, 3012 
	Bern, Switzerland}

\curraddr{Department of Mathematics, 
	University College London,
	Gower Street, London,
	WC1E 6BT, UK}

\email{orif.ibrogimov@math.unibe.ch, o.ibrogimov@ucl.ac.uk}

\date{\today}

\begin{abstract}
The purpose of this paper is to study the essential spectrum of non-self-adjoint singular matrix differential operators in the Hilbert space $L^2(\RR)\oplus L^2(\RR)$ induced by matrix differential expressions of the form
\begin{align}\label{abstract:mdo}
    \left(\begin{array}{cc} 
    \tau_{11}(\mydot,D) & \tau_{12}(\mydot,D)\\[3.5ex]
    \tau_{21}(\mydot,D) & \tau_{22}(\mydot,D)
    \end{array}\right),
\end{align} 
where $\tau_{11}$, $\tau_{12}$, $\tau_{21}$, $\tau_{22}$ are respectively $m$-th, $n$-th, $k$-th and 0 order ~ordinary differential expressions with $m=n+k$ being even. Under suitable assumptions on their coefficients, we establish an analytic description of the essential spectrum. It turns out that the points of the essential spectrum either have a local origin, which can be traced to points where the ellipticity in the sense of Douglis and Nirenberg breaks down, or they are caused by singularity at infinity. 
\end{abstract}

\maketitle

\section{Introduction} 

The spectral analysis of singular matrix differential operators generated by matrix differential expressions as in \eqref{abstract:mdo} is important in many branches of theoretical physics including magnetohydrodynamics and  astrophysics, see e.g.\ \cite{Lifschitz-89b}, \cite{BEY95}. For example, in models of linear stability theory of plasmas confined to a toroidal region in $\RR^3$, by eliminating one variable by means of the $S^1$-symmetry, one arrives at second order systems of partial differential equations in the radial and angular variables on the cross section of the torus. Using a Fourier series decomposition with respect to the angular variable, the operator matrix then becomes a direct sum of operators of the form \eqref{abstract:mdo}, 
see e.g.\ \cite{Decloux-Geymonat-80}, \cite{FMM04}. In view of linear stability analysis and numerical approximations, it is of crucial importance to have information on the location of the whole essential spectrum of operator matrices as in \eqref{abstract:mdo}.  

Two of the reasons why such matrix differential operators received continuous attention of specialists in spectral theory during the last thirty years may be explained as follows. Firstly, in contrast to the case of scalar differential operators, matrix differential operators need not to have empty essential spectrum even if the underlying domain is compact and the corresponding boundary conditions are regular. This is due to the matrix structure which allows for essential spectrum to arise because of the violation of ellipticity in an appropriate sense. Secondly and most interestingly, in the case when the underlying domain is not compact, the essential spectrum of the matrix differential operator cannot be approximated by the essential spectra of operators determined by the same operator matrix on an increasing sequence of compact sub-domains exhausting to the original domain. In fact, it turns out that the essential spectrum can have a branch caused because of the singularity at infinity or, more generally, at 
the boundary of a non-compact interval.

Essential spectrum of matrix differential operators generated by  \eqref{abstract:mdo} is well known if the underlying domain is compact, see \cite{ALMS94}. The situation is much more complicated if the underlying domain is non-compact. In this case the spectral properties are far from being fully-understood up to date in the non-self-adjoint setting, especially when the matrix differential operator is not a perturbation of a self-adjoint operator.

The appearance of the branch of essential spectrum due to the singularity at the boundary was first predicted by Descloux and Geymonat \cite{Decloux-Geymonat-80} in connection with a physical model describing the oscillations of plasma in an equilibrium configuration in a cylindrical domain and proven much later by Faierman, Mennicken and M\"oller\ ~\cite{FMM04}. Similar phenomena in connection with problems of theoretical physics have been studied by many authors including Kako~\cite{Kako84, Kako85, Kako87}, Descloux and Kako \cite{D-Kako91},  Raikov \cite{Raikov-90, Raikov-91}, Beyer \cite{BEY95}, Atkinson, H. Langer, Mennicken and Shkalikov \cite{ALMS94}, H. Langer and M\"oller \cite{LM96}, Faierman, Mennicken and M\"oller \cite{FMM99, FMM00}, Hardt, Mennicken and Naboko \cite{HMN99}, Konstantinov~\cite{Konstantinov-2002}, Mennicken, Naboko and Tretter \cite{MNT02}, Kurasov and Naboko \cite{KN02}, M\"oller \cite{Moeller04}, Marletta and Tretter \cite{Mt07}, Kurasov, Lelyavin and Naboko \cite{KLN08}, Qi and Chen \cite{QCh11, QCh11-b}. 

Most of these studies were concerned with the investigation of particular and ``almost-symmetric" operators and it was shown that the essential spectrum due to the singularity at the boundary appears because of a very special interplay between the matrix entries. The first analytic description the essential spectrum in the general setting was established in \cite{ILLT13} in the symmetric case with $m=2$, $n=k=1$ and $[0,\infty)$ instead of $\RR$. The results of \cite{ILLT13} were later extended to much wider classes of symmetric matrix differential operators under considerably weaker assumptions in \cite{IST15} where the second diagonal entry allowed to be a matrix multiplication operator.

The current manuscript seems to be the first attempt to investigate the essential spectrum, in particular, both above mentioned spectral phenomena, for \emph{non-self-adjoint} matrices of ordinary differential operators of mixed-orders on the real line. The aim is to establish an analytic description of the entire essential spectrum in terms of the coefficients of \eqref{abstract:mdo}. Our method to describe the part of the essential spectrum caused by the singularity at infinity analytically is different from the so-called ``cleaning of the resolvent" approach suggested in [20], which was based on a result on the 
essential spectrum of separable sum of pseudo-differential operators, see [20, Theorem A.1]. Nevertheless, the remarks in [20] concerning the non-self-adjoint case have inspired the present paper.


The paper is organized as follows. Section 2 provides the necessary operator theoretic framework for the matrix differential operator generated by \eqref{abstract:mdo} in the Hilbert space $L^2(\RR)\oplus L^2(\RR)$ as well as for its first Schur complement. Section ~3 is dedicated to the description of the essential spectrum due to the singula\-rity at infinity. It is characterized in terms of the essential spectrum of the first Schur complement using the characterization of Fredholm operators in terms of approximate/generalized inverses and pseudo-differential operator techniques. In Section 4 the essential spectrum due to the violation of ellipticity in the sense of Douglis and Nirenberg is described. Section 5 contains the main result of the paper (see Theorem~\ref{thm:main.result}), where by suitable gluing and smoothing the results of the two previous sections are blended together. The obtained analytic description of the whole essential spectrum is given in terms of the original coefficients of the matrix differential operator in \eqref{abstract:mdo} and illustrated by an example.  

The following notation is used throughout the paper. We write $\langle\mydot,\mydot\rangle$ for the inner product in $L^2(\RR)$. For a Banach space $X$, by $\sC(X)$, $\mathscr{B}(X)$ and $\mathscr{K}(X)$, we denote respectively the set of closed, bounded and compact linear operators acting from $X$ to itself. For $T\in\sC(X)$, we denote by $\Dom(T)$, $\Ker(T)$ and $\Ran(T)$ the domain, kernel and the range of $T$, respectively. For a densely defined operator $T\in\mathscr{C}(X)$, $\rho(T)$ and $\Pi(T)$ denote respectively its resolvent set and the regularity field; for the essential spectrum, we use the definition 
\begin{equation*}
    \sess (T) := \{ \la \in \CC : T-\la \text{ is not Fredholm}\},
\end{equation*}
which is the set $\sigma_{\rm{e}3}(T)$ in \cite[Section~IX.1]{EE87}. Recall that $T\in\sC(X)$ is called Fredholm if $\Ran(T)$ is closed and both $\nl(T):=\dim\Ker(T)$ and $\df(T):=\dim X/\Ran(T)$ are finite. An identity operator is denoted by $I$, and scalar multiples $\omega I$ for $\omega\in\CC$ are written as $\omega$.

Furthermore, $\sS(\RR)$ stands for the Schwartz space of rapidly decaying functions on $\RR$. For $s\in\RR$ and a subinterval $J\subset\RR$, we denote by $H^s(J)$ the $L^2$-Sobolev space of order $s$ and by $H^s_0(J)$ the closed linear subspace of $H^s(J)$ obtained by taking the closure of $C_0^\infty(J)$ in $H^s(J)$. By $B^\infty(\RR,\CC)$ we denote the space of infinitely smooth functions $f:\RR\to\CC$ with bounded derivatives of arbitrary order. For a subinterval $J\subset\RR$ and a function $f:J\to\CC$, we denote by $f(J)$ the image of $J$ under $f$ and by ${\cl}\{f(J)\}$ the closure of the set $f(J)$ in $\CC$. 

\section{Higher order matrix differential operators and associated \\first Schur complement}\label{sec:A.def}
Let $n$, $k$ be non-negative integers such that $m:=n+k\in 2\NN$. We introduce the differential expressions
\begin{equation}\label{diff.expressions}
\begin{aligned}
    \tau_{11}(x,D) &:= \sum_{\alpha=0}^{m} a_{\alpha} D^\alpha, \qquad &\tau_{12}(x,D) &:= \sum_{\beta=0}^{n} b_{\beta}D^\beta\\
    \tau_{21}(x,D) &:= \sum_{\gamma=0}^{k} c_{\gamma}D^\gamma, \qquad &\tau_{22}(x,D) &:= d,
\end{aligned}
\end{equation}
where $D:=-\ii \, {\rm{d}}/{\rm{d}}x$ is the momentum operator, and we assume that the coefficient functions satisfy the following hypotheses. 
\begin{ass}
\label{ass:A} 
$a_{\alpha}, b_{\beta}, c_{\gamma}, d\!\in\! C^{\infty}(\RR,\CC)$ for all $\alpha\in\{0,1,\ldots,m\}$, $\beta\in\{0,1,\ldots,n\}$, $\gamma\in\{0,1,\ldots,k\}$ and $a_m(x)\neq0$, $x\in\RR$.
\end{ass}

Let $A_0$, $B_0$, $C_0$ and $D_0$ be the operators in the Hilbert space $L^2(\RR)$ induced respectively by the differential expressions $\tau_{11}(\mydot,D)$, $\tau_{12}(\mydot,D)$, $\tau_{21}(\mydot,D)$ and $\tau_{22}(\mydot,D)$ with domains being $C_0^\infty(\RR)$.

In the Hilbert space $\cH := L^2(\RR) \oplus L^2(\RR)$, we introduce the matrix differential operator
\begin{equation}\label{A0}
\begin{aligned} 
    \cA_0 & := \begin{pmatrix} A_0 & B_0 \\ C_0 & D_0 \end{pmatrix} :=
    \left(\begin{array}{cc} 
    \ds\sum_{\alpha=0}^{m} a_{\alpha}D^\alpha \: & \ds\sum_{\beta=0}^{n} b_{\beta}D^\beta\\[4ex]
    \ds\sum_{\gamma=0}^{k} c_{\gamma}D^\gamma \: & d 
    \end{array}\right)
\end{aligned}
\end{equation}
on the domain 
    \begin{equation}\label{domA0}
    \Dom(\cA_0) := C_0^\infty(\RR) \oplus C_0^\infty(\RR).
    \end{equation}
An easy integration by parts argument shows that the domain of the adjoint of $\cA_0$ contains  $C_0^\infty(\RR) \oplus C_0^\infty(\RR)$, and hence $\Dom(\cA_0^*)$ is dense in $\cH$. Therefore, $\cA_0$ is closable; we denote the closure of  $\cA_0$ by $\cA$.

Schur complements are useful tools in studying spectral properties of operator matrices, see \cite[Section 2.2]{Tre08}. Formally, the (first) Schur complement of the operator matrix $\cA_0$ in \eqref{A0} is given by $A_0-\la-B_0(D_0-\la)^{-1}C_0$ in $L^2(\RR)$ for $\la\in\rho(\ov{D_0})=\CC\setminus{\cl}\{d(\RR)\}$. Hence, for $\la\in\CC\setminus{\cl}\{d(\RR)\}$ it is induced by the $m$-th order scalar differential expression   
\begin{align*}
    \tau_S(\la) & := \tau_{11}-\la-\tau_{12}(\tau_{22}-\la)^{-1}\tau_{21}\\
                &= \sum_{\alpha=0}^{m} a_{\alpha}D^\alpha - \la  - \Bigl(\sum_{\beta=0}^{n} b_{\beta}D^\beta \Bigr) 
                \frac{1}{d-\la} \Bigl(\ds\sum_{\gamma=0}^{k} c_{\gamma}D^\gamma \Bigr).
\end{align*}    
The differential expression $\tau_S(\la)$ can be rewritten in the standard form as 
\begin{equation}  
\ds\tau_S(\la)=\sum_{j=0}^m p_j(\mydot,\la)D^j
\end{equation}
where, for $\la\in\CC\setminus{\cl}\{d(\RR)\}$, the coefficient functions are given by
\begin{equation}\label{Schur.coeff}
    p_j(\mydot,\la) := \begin{cases} 
                     \ds a_0-\la-\frac{b_0c_0}{d-\la}-\sum_{\beta=0}^n  b_{\beta}\frac{\D^{\beta}}{\D x^{\beta}}\Bigl(\frac{c_0}{d-\la}\Bigr) & \mbox{for} \quad j=0, \\[3ex]
                     \ds a_j-\sum_{\beta=0}^{\min(n,j)}\frac{b_{\beta}c_{j-\beta}}{d-\la} - \sum_{\beta=0}^n  b_{\beta}\frac{\D^{\beta}}{\D x^{\beta}}\Bigl(\frac{c_j}{d-\la}\Bigr) & \mbox{for} \quad 1\leq j \leq k, \\[3ex]
                     \ds a_j-\sum_{\beta=j-k}^{\min(n,j)}\frac{b_{\beta}c_{j-\beta}}{d-\la} & \mbox{for} \quad k<j\leq m.
                    \end{cases}
\end{equation}
For $\la\in\CC\setminus{\cl}\{d(\RR)\}$, the differential expression $\tau_S(\la)$ induces an operator $S_0(\la)$ in $L^2(\RR)$ on the Schwartz space,
\[
S_0(\la)u := \tau_S(\la)u, \quad u\in\Dom(S_0(\la)) := \mathscr{S}(\RR). 
\]
Moreover, $S_0(\la)$ is closable since $\mathscr{S}(\RR)$ is contained in the domain of  adjoint of $S_0(\la)$ and the former is dense in $L^2(\RR)$; we denote the closure of $S_0(\la)$ by $S(\la)$.

Observe that, by \eqref{Schur.coeff}, for $\la\in\CC\setminus{\cl}\{d(\RR)\}$, the leading coefficient of $S_0(\la)$ is given by
\begin{align}\label{leading-coefficient-Schur}
p_m(\mydot,\la) = a_m-\frac{b_nc_k}{d-\la}=a_m\frac{\Delta-\la}{d-\la},
\end{align}
where $\Delta:\RR\to\CC$ is defined as 
\begin{align}\label{defDelta}
\Delta \defeq d-\frac{b_nc_k}{a_m}.
\end{align}
\begin{remark}\label{S-non-elliptic}
(i) It is obvious from \eqref{leading-coefficient-Schur} that the ellipticity of $S(\la)$ breaks down whenever $\la\in\CC\setminus{\cl}\{d(\RR)\}$ lies in the range of $\Delta$. In Section~\ref{sec:DN-non-ellipticity} we will show that such points belong to the essential spectrum of $T$.

\smallskip
\noindent
(ii) The conditions $m=n+k$ and $D_0$ is of zero order are due to the employment of a result from \cite{ALMS94} on the essential spectrum of matrix differential operators generated by \eqref{A0} over compact intervals. We assume $m$ to be even because of the estimate for the numerical range of the Schur complement (see Lemma~\ref{lem:num.range}) which in turn is crucial for the main result of the paper.
\end{remark}

\section{Essential spectrum due to the singularity at infinity}\label{sec:ess.spec.due.infty}

In this section we are concerned with the description of the part of the essential spectrum that arises because of the singularity at infinity. Here we need the uniform ellipticity of the Schur complement $S(\la)$ and hence we exclude all $\la\in{\cl}\{\Delta(\RR)\}$, see Remark~\ref{S-non-elliptic}(i). 

The method to be used in this section is identical to the one of \cite{IT-2016-PsiDO} and the results are more or less particular cases of the corresponding results therein, although in \cite{IT-2016-PsiDO} both of the diagonal entries are positive-order pseudo-differential operators. Nevertheless, we provide here full details in order to make the paper self-contained. 

The hypotheses on the coefficient functions of the first Schur complement as well as of some auxiliary operators (Assumptions $(\rm{B}1)$-$(\rm{B}3)$ below) rule out the use of classical pseudo-differential operator theory.

\begin{definition}
For $\ell\in\RR$, the H\"ormander symbol class\footnote{also called the Kohn-Nirenberg symbol class, see e.g.\ \cite{Taylor-81b}.} $S^\ell(\RR^2)$ is defined to be the set of all infinitely smooth functions $\sigma:\RR^2 \to \CC$ such that for all $\alpha,\beta\in\NN_0$, there is a positive constant $C_{\alpha,\beta}$, depending only on $\alpha,\beta$, for which 
\[
|(\partial^{\beta}_x\partial^{\alpha}_{\xi})(x,\xi)| \leq C_{\alpha,\beta}(1+|\xi|)^{\ell-\alpha}, \quad (x,\xi)\in\RR^2,
\]
see e.g.\ \cite{Taylor-81b}, \cite{Wong-14b}. Further, we set  
\[
S^{-\infty}(\RR^2) := \bigcap_{\ell\in\RR}S^\ell(\RR^2), \quad S^{\infty}(\RR^2) := \bigcup_{\ell\in\RR}S^\ell(\RR^2)
\]
and recall that for $\sigma\in S^{\infty}(\RR^2)$, the pseudo-differential operator $T_{\sigma}$ with symbol $\sigma$ on the Schwartz space $\mathscr{S}(\RR)$ is defined by
\[
(T_{\sigma}\phi)(x) = \frac{1}{\sqrt{2\pi}}\int_{\RR}{\rm{e}}^{\ii x\xi}\sigma(x,\xi)\wh{\phi}(\xi) {\rm{d}}\xi, \quad \phi\in\Dom(T_{\sigma})=\mathscr{S}(\RR),
\]
where $\wh{\phi}$ is the Fourier transform of $\phi\in\mathscr{S}(\RR)$,
\[
\wh{\phi}(\xi) = \frac{1}{\sqrt{2\pi}}\int_{\RR}{\rm{e}}^{-\ii x\xi}\phi(x) {\rm{d}}x, \quad \xi\in\RR.
\]
\end{definition}

\begin{ass}\label{ass:B} Suppose that, for every $\la\in\CC\setminus\bigl({\cl}\{\Delta(\RR)\}\cup \cl \{d(\RR)\}\bigr)$, 
	\noindent
	\begin{enumerate}[label={{\upshape(B\arabic{*})}}]
	\item \label{ass:Schur-1}
	$p_j(\mydot,\la)\in B^\infty(\RR,\CC)$, $j\in\{0,1,\ldots,m\}$;
	\smallskip
	\item \label{ass:Schur-2}	
	$\dfrac{1}{p_{m}(\mydot,\la)}$ is bounded on $\RR$;
	\smallskip
	\item \label{ass:Schur-3}
	for all $\gamma\in\{0,1,\ldots,k\}$ and $\beta\in\{0,1,\ldots,n\}$, 
	\begin{align}
	c_\gamma (d-\la)^{-1}, \quad \bigl(b_\beta(d-\la)^{-1}\bigr)^{(j)}\in B^\infty(\RR,\CC), \quad j=0,1,\ldots,\beta.
	\end{align}
	\end{enumerate}
\end{ass}
\begin{remark}\label{rem:Schur-psido}
It is easy to see from \eqref{leading-coefficient-Schur} that the assumption \ref{ass:Schur-2} is automatically satisfied if $d$ is bounded 
and $\inf\limits_{x\in\RR}|a_m(x)|>0$.
\end{remark}

Note that assumption \ref{ass:Schur-1} allows us to consider the first Schur complement $S_0(\la)$ as a pseudo-differential operator on $\sS(\RR)$ with symbol $\sigma_\la$, given by   
\begin{equation}\label{symbol-S}
\sigma_{\la}(x,\xi) := \sum_{j=0}^{m}p_j(x,\la)\xi^j, \quad (x,\xi)\in\RR^2,
\end{equation}
belonging to the symbol class $S^m(\RR^2)$. By Assumption~\ref{ass:Schur-2}, the corresponding minimal operator $S(\la)$ is uniformly elliptic and hence
\begin{align}\label{dom-of-S}
\Dom(S(\la))=H^m(\RR),
\end{align}
see e.g.\ \cite{Wong-14b}. Moreover, $S(\la)$ has a parametrix, i.e. there exists a pseudo-differential operator $S^{\rm{p}}(\la)$ with symbol in $S^{-m}(\RR^2)$ and pseudo-differential operators $L_\la$ and $R_\la$ with symbols in $S^{-\infty}(\RR^2)$ such that 
	\begin{align}
	\label{parametrix-for-S}
	S^{\rm{p}}(\la)S(\la) = I+L_\la\,, \quad S(\la)S^{\rm{p}}(\la) = I+R_\la,
	\end{align}
respectively on $H^m(\RR)$ and $L^2(\RR)$.

In view of Assumption~\ref{ass:Schur-3}, whenever $\la\notin {\cl}\{d(\RR)\}$, the differential expressions $(D_0-\la)^{-1}C_0$ and $B_0(D_0-\la)^{-1}$ induce pseudo-differential operators on the Schwartz space $\sS(\RR)$ with symbols respectively in $S^k(\RR^2)$ and $S^n(\RR^2)$. We will need the following extensions of these operators,
\begin{align}
\label{def:F1}
F_1(\la)& := (D_0-\la)^{-1}C_0, \qquad \Dom(F_1(\la)):=H^k(\RR),\\
\label{def:F2}
F_2(\la)& := B_0(D_0-\la)^{-1}, \qquad \Dom(F_2(\la)):=H^n(\RR).
\end{align}

Furthermore, we will need the characterization of semi-Fredholm operators in terms of approximate inverses. Following \cite{EE87}, an operator $T\in\mathscr{C}(X)$ is said to have a \textit{left approximate inverse} if, and only if, there are operators $R_{\ell}\in\mathscr{B}(X)$ and $K_X\in\mathscr{K}(X)$ such that $I_X+K_X$ extends\footnote{\,Here it is sufficient to verify the equality $I_X+K_X=R_{\ell}T$ on any core of $T$.} $R_{\ell}T$. 

We need the following fact, the proof of which can be easily read off from the proofs of \cite[Theorems I.3.12-13]{EE87} and \cite[Lemma I.3.12]{EE87}.

\begin{proposition}\label{prop:left.apprx.inv}
	If $T\in\mathscr{C}(X)$ has a left approximate inverse, then $T$ has closed range and finite nullity.
\end{proposition}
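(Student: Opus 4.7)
The plan is to exploit classical Riesz--Schauder theory for the compact perturbation $I_X+K_X$. By hypothesis, on $\Dom(T)$ we have the identity $R_\ell T=I_X+K_X$, so facts about the kernel and range of $I_X+K_X$ transfer, with some care, to $T$.

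\textbf{Finite nullity.} The first step is immediate. If $x\in\Ker(T)\subseteq\Dom(T)$, then $R_\ell Tx=0$, hence $(I_X+K_X)x=0$. Thus $\Ker(T)\subseteq\Ker(I_X+K_X)$, and the latter is finite-dimensional by Riesz--Schauder, so $\nl(T)<\infty$.

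\textbf{Closed range.} This is the main step. I take a sequence $y_n=Tx_n\to y$ in $X$ and aim to exhibit $y$ as $Tx$ for some $x\in\Dom(T)$. First, since $\Ker(T)$ is finite-dimensional I can replace each $x_n$ by $x_n-n_n$ with $n_n\in\Ker(T)$ chosen so that $\|x_n\|\leq 2\,\dist(x_n,\Ker(T))$ (or, equivalently, $\|x_n\|\leq 2\inf_{n\in\Ker(T)}\|x_n-n\|$); this does not change $Tx_n$. I then claim that $(x_n)$ is bounded. Suppose not; passing to a subsequence and rescaling, I may assume $\|x_n\|=1$ and $Tx_n\to 0$. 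Then
\begin{equation*}
(I_X+K_X)x_n=R_\ell Tx_n\longrightarrow 0.
\end{equation*}
Compactness of $K_X$ lets me extract a subsequence $(x_{n_j})$ with $K_Xx_{n_j}\to -w$ for some $w\in X$; therefore $x_{n_j}=(I_X+K_X)x_{n_j}-K_Xx_{n_j}\to w$ with $\|w\|=1$. Closedness of $T$ now gives $w\in\Dom(T)$ and $Tw=0$, i.e.\ $w\in\Ker(T)$. But the normalisation forced $\dist(x_{n_j},\Ker(T))\geq 1/2$, hence $\dist(w,\Ker(T))\geq 1/2$, contradicting $w\in\Ker(T)$. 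So $(x_n)$ is bounded, and repeating the compactness extraction (now on a bounded sequence with $(I_X+K_X)x_n\to R_\ell y$) yields a subsequence $x_{n_j}\to x$ in $X$. Closedness of $T$ then gives $x\in\Dom(T)$ and $Tx=y$, proving $\Ran(T)$ is closed.

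\textbf{Expected obstacle.} The bookkeeping for finite nullity is essentially free from Riesz--Schauder, and the definition of ``left approximate inverse'' makes the kernel inclusion trivial. The genuine work is the compactness/contradiction argument for closed range: it requires the careful choice of representatives $x_n$ modulo $\Ker(T)$ (which uses finite-dimensionality already established) together with the interplay between compactness of $K_X$ and closedness of $T$. The argument is standard in the style of \cite[Theorems I.3.12--13]{EE87}, but must be arranged in the correct order---first obtain finite nullity, then select representatives far from $\Ker(T)$, then run the rescaling argument to get boundedness, and only at the end invoke compactness a second time plus closedness of $T$ to conclude.
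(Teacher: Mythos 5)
Your proof is correct, and it is essentially the argument the paper invokes by reference to \cite[Theorems I.3.12--13 and Lemma I.3.12]{EE87}: kernel inclusion into $\Ker(I_X+K_X)$ for finite nullity, then the choice of representatives modulo $\Ker(T)$, the rescaling/compactness contradiction for boundedness, and a final compactness plus closedness step for closed range. No gaps; only a cosmetic remark that reusing the letter $n$ both as index and as an element of $\Ker(T)$ is best avoided.
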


It is a well-known fact that Fredholm operators admit two-sided approximate inverses. We will also need a special two-sided approximate inverse which can be described as follows. Let $T\in\sC(X)$ be Fredholm operator and define $\wt{T}$ to be the restriction of $T$ to $\Dom(T) \cap \Ker(T)^{\bot}$. Then $\wt{T}$ is injective and $\Ran(\wt{T})=\Ran(T)$ is closed. Hence the operator $\wt{T}^{-1}$, considered as a map from $\Ran(T)$ onto $\Dom(T) \cap \Ker(T)^{\bot}$, is bounded. Let $P$ and $I-Q$ be the orthogonal projections respectively onto $\Ker(T)$ and $\Ran(T)$. Defining the operator 
\begin{align}\label{def:gen.inv}
T^\dagger:=\wt{T}^{-1}(I-Q), \quad \Dom(T^\dagger):=X,
\end{align}
we immediately obtain
\begin{align}
\label{gen.inv-1}
T^\dagger T = I-P, \quad  T T^\dagger = I-Q
\end{align}
on $\Dom(T)$ and $X$, respectively. The constructed operator $T^\dagger\in\mathscr{B}(X)$ is called a \textit{generalized inverse} of $T$, see e.g.\ \cite{GGK-90b1}. Note that the generalized inverse is a two-sided approximate inverse since the operators $P$ and $Q$ are of finite-rank.

In contrast to the case when the underlying domain is compact, we can't view parametrices of pseudo-differential operators on the real line as two-sided approximate inverses. This is because pseudo-differential operators of negative orders on the real line are in general not compact in Sobolev spaces, see e.g. \cite[Section 2.3]{Agranovich90}. However, every parametrix $S^{\rm{p}}(\la)$ of $S(\la)$ is connected to its generalized inverse $S^\dagger(\la)$ by the following simple relationship which plays a crucial role in this paper.

\begin{lemma}\label{lem:param-vs-apprx.inv}
Let Assumptions~\ref{ass:A}, \ref{ass:B} be satisfied and $\la \notin {\cl}\{\Delta(\RR)\}\cup{\cl}\{d(\RR)\}$ be such that $S(\la)$ is Fredholm. Let $S^\dagger(\la)$ and $S^{\rm{p}}(\la)$ be respectively an approximate inverse and a parametrix of $S(\la)$. Then
	\begin{align}
	\label{rep:param-vs-apprx.inv}
	S^\dagger(\la)= S^{\rm{p}}(\la)(I-Q_\la) + L_\la S^\dagger(\la) R_\la - L_\la(I-P_\la)S^{\rm{p}}(\la),
	\end{align}
on $L^2(\RR)$, where $P_\la$ and $I-Q_\la$ are the orthogonal projections onto $\Ker(S(\la))$ and $\Ran(S(\la))$, respectively.
\end{lemma}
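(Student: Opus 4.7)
The plan is to derive the identity purely algebraically from the four defining relations
\begin{align*}
S^{\rm{p}}(\la)S(\la) &= I+L_\la \ \text{on}\ H^m(\RR), &
S(\la)S^{\rm{p}}(\la) &= I+R_\la \ \text{on}\ L^2(\RR),\\
S^\dagger(\la)S(\la) &= I-P_\la \ \text{on}\ H^m(\RR), &
S(\la)S^\dagger(\la) &= I-Q_\la \ \text{on}\ L^2(\RR),
\end{align*}
together with the mapping properties that $S^{\rm{p}}(\la)$, as an order $-m$ pseudo-differential operator, is bounded from $L^2(\RR)$ into $H^m(\RR)=\Dom(S(\la))$, and that $S^\dagger(\la)$, by construction, also maps $L^2(\RR)$ into $\Dom(S(\la))$. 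Both observations guarantee that every composition appearing below acts on elements in the domain of the operator that is applied next, so all identities hold everywhere on $L^2(\RR)$.

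First I would multiply $S(\la)S^\dagger(\la) = I-Q_\la$ from the left by $S^{\rm{p}}(\la)$ and use the first parametrix relation to get
\begin{equation*}
S^{\rm{p}}(\la)(I-Q_\la) \;=\; S^{\rm{p}}(\la)S(\la)S^\dagger(\la) \;=\; (I+L_\la)S^\dagger(\la),
\end{equation*}
which rearranges to
\begin{equation*}
S^\dagger(\la) \;=\; S^{\rm{p}}(\la)(I-Q_\la) - L_\la S^\dagger(\la). \tag{$\ast$}
\end{equation*}

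Next I would eliminate the remaining $L_\la S^\dagger(\la)$ term by inserting $I = S(\la)S^{\rm{p}}(\la) - R_\la$ on its right and using $S^\dagger(\la)S(\la) = I - P_\la$ on $H^m(\RR)$:
\begin{equation*}
L_\la S^\dagger(\la) \;=\; L_\la S^\dagger(\la)\bigl[S(\la)S^{\rm{p}}(\la)-R_\la\bigr] \;=\; L_\la(I-P_\la)S^{\rm{p}}(\la) - L_\la S^\dagger(\la)R_\la.
\end{equation*}
Substituting this back into $(\ast)$ yields exactly \eqref{rep:param-vs-apprx.inv}.

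The argument is essentially a two-step algebraic manipulation, so there is no real hurdle beyond verifying that each composition is legitimate. The only point that deserves care is justifying that $S^\dagger(\la)S(\la)=I-P_\la$ may be applied after $S^{\rm{p}}(\la)$, which follows because $\Ran(S^{\rm{p}}(\la))\subset H^m(\RR)=\Dom(S(\la))$, and, symmetrically, that $S^{\rm{p}}(\la)S(\la)=I+L_\la$ may be applied after $S^\dagger(\la)$, which follows because $\Ran(S^\dagger(\la))\subset\Dom(S(\la))\cap\Ker(S(\la))^{\bot}\subset H^m(\RR)$. Once these domain issues are settled, identity \eqref{rep:param-vs-apprx.inv} drops out.
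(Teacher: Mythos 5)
Your proof is correct and follows essentially the same route as the paper's: both derive \eqref{rep:param-vs-apprx.inv} by purely algebraic manipulation of the two parametrix relations and the two generalized-inverse relations, with the same domain justifications ($\Ran(S^{\rm{p}}(\la))\subset H^m(\RR)=\Dom(S(\la))$ and $\Ran(S^\dagger(\la))\subset H^m(\RR)$). Your arrangement is marginally cleaner in that it never needs the auxiliary identities $P_\la S^\dagger(\la)=S^\dagger(\la)Q_\la=0$, which the paper's version invokes.
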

\begin{proof}
By the first relations in \eqref{parametrix-for-S} and \eqref{gen.inv-1}, we get 
	\begin{equation}\label{gen.inv-11}
	(S^\dagger(\la)-S^{\rm{p}}(\la))S(\la) = -P_\la-L_\la 
	\end{equation}
on $\Dom(S(\la))=H^m(\RR)$. Similarly, the second relations in \eqref{parametrix-for-S} and \eqref{gen.inv-1} yield 
	\begin{equation}\label{gen.inv-22}
	S(\la) (S^\dagger(\la)-S^{\rm{p}}(\la)) = -Q_\la-R_\la
	\end{equation}
on $L^2(\RR)$. Hence, multiplying both sides of \eqref{gen.inv-11} by $S^\dagger(\la)$ from the right and using the second relation in \eqref{gen.inv-1}, we obtain    
	\begin{align}
	\label{T:rep-1}
	(S^\dagger(\la)-S^{\rm{p}}(\la))(I-Q_\la) = -P_\la S^\dagger(\la) -L_\la S^\dagger(\la) 
	\end{align}
on $L^2(\RR)$. Similarly, multiplying both sides of \eqref{gen.inv-22} by $S^\dagger(\la)$ from the left and using the first relation in \eqref{gen.inv-1}, we obtain 
	\begin{align}
	\label{T:rep-2}
	S^\dagger(\la) = (I-P_\la)S^{\rm{p}}(\la)-S^\dagger(\la) R_\la + P_\la S^\dagger(\la)-S^\dagger(\la) Q_\la 
	\end{align}
on $L^2(\RR)$. Observe from the definition of $S^\dagger(\la)$ that $P_\la S^\dagger(\la) = S^\dagger(\la) Q_\la=0$ since $P_\la$ and $Q_\la$ are projections, see \eqref{def:gen.inv}. The claim thus follows by inserting \eqref{T:rep-2} for $S^\dagger(\la)$ on the right-hand side of \eqref{T:rep-1}.
\end{proof}

In the sequel, we will need the following lemma and its important corollary. 

\begin{lemma}\label{lem:kernel-smooth}
Let Assumptions~\ref{ass:A}, \ref{ass:B} be satisfied and $\la \notin {\cl}\{\Delta(\RR)\}\cup{\cl}\{d(\RR)\}$. Then
	\begin{enumerate}[font=\upshape,label=\upshape(\roman*\upshape)]
	\item $(\phi_1, \phi_2)^t \!\in\! \Ker(\cA-\la)$ implies $\phi_1 \in \Ker(S(\la))$ and $\phi_2=-(D_0-\la)^{-1}C_0\phi_1$;
	\item $(\phi_1, \phi_2)^t \!\in\! \Ker(\cA^*-\ov\la)$ implies $\phi_1 \in \Ker(S(\la)^*)$ and $\phi_2=-  (D_0^*-\ov{\la})^{-1}B_0^*\phi_1$.
	\end{enumerate}
Furthermore, in either case, we have  
	\begin{align}\label{H-alpha-1}
	(\phi_1, \phi_2)^t \!\in\! \sS(\RR) \!\oplus\! \sS(\RR).
	\end{align}      
\end{lemma}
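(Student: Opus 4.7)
The plan is to prove (i) in detail; (ii) follows by an entirely analogous argument applied to the adjoint $\cA^*$, noting that the first Schur complement of $\cA^*$ at $\ov\la$ is $S(\la)^*$. The argument for (i) splits into an algebraic identification of the kernel and an upgrade of the regularity/decay.

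\textbf{Algebraic step.} Since $\cA$ is the closure of $\cA_0$ on $C_0^\infty(\RR)\oplus C_0^\infty(\RR)$, I would pick an approximating sequence $(u_j,v_j)\in C_0^\infty(\RR)\oplus C_0^\infty(\RR)$ with $(u_j,v_j)\to(\phi_1,\phi_2)$ in $\cH$ and $(\cA_0-\la)(u_j,v_j)\to 0$ in $\cH$. Because $\la\notin\cl\{d(\RR)\}$, the operator $D_0-\la$ is the bounded multiplication operator by $d-\la$, with bounded inverse $1/(d-\la)\in B^\infty(\RR,\CC)$. The second component of $(\cA_0-\la)(u_j,v_j)$ reads $C_0u_j+(d-\la)v_j\to 0$ in $L^2(\RR)$; multiplying by $1/(d-\la)$ and passing to the distributional limit gives $\phi_2=-(D_0-\la)^{-1}(C_0\phi_1)$, with $C_0\phi_1$ read distributionally since a priori $\phi_1\in L^2(\RR)$. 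Eliminating $\phi_2$ from the first component and using the representation \eqref{Schur.coeff} of $\tau_S(\la)$ yields $S(\la)\phi_1=0$ as a distributional identity on $\RR$.

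\textbf{From distributional to $H^m$ and then to $\sS$.} By Assumption~\ref{ass:B}, $S(\la)$ is uniformly elliptic with parametrix $S^{\rm p}(\la)$ as in \eqref{parametrix-for-S}; applying $S^{\rm p}(\la)$ to the distributional identity $S(\la)\phi_1=0$ gives $\phi_1=-L_\la\phi_1$. Since the symbol of $L_\la$ lies in $S^{-\infty}(\RR^2)$, iterating this identity yields $\phi_1\in C^\infty(\RR)\cap H^s(\RR)$ for every $s\ge0$; in particular $\phi_1\in H^m(\RR)=\Dom(S(\la))$ and $S(\la)\phi_1=0$ in $L^2(\RR)$. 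For rapid decay I would view $S(\la)\phi_1=0$ as a uniformly elliptic $m$-th order ODE with $B^\infty$-coefficients and run an Agmon-type weighted estimate: conjugation by $e^{\delta\langle x\rangle}$ with $\delta>0$ small preserves uniform ellipticity at large frequencies, and the resulting identity combined with the $L^2$-based a priori estimate forces $e^{\delta\langle x\rangle}\phi_1\in H^m(\RR)$; bootstrapping in $\delta$ and on derivatives yields $\langle x\rangle^N\partial^j\phi_1\in L^2(\RR)$ for all $N,j\in\NN_0$, i.e.\ $\phi_1\in\sS(\RR)$. Finally $\phi_2=-(d-\la)^{-1}C_0\phi_1\in\sS(\RR)$, since $1/(d-\la)\in B^\infty(\RR,\CC)$ and $C_0$ has $B^\infty$-coefficients.

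\textbf{Expected main obstacle.} The delicate point is the Schwartz decay of $\phi_1$: pseudo-differential elliptic regularity alone delivers only $H^\infty$-smoothness, not rapid decay in $x$, so the ODE structure of $S(\la)$ and the quantitative hypothesis $\la\notin\cl\{\Delta(\RR)\}$ (which, via \eqref{leading-coefficient-Schur}, guarantees $|p_m(\cdot,\la)|\ge c>0$ uniformly in $x$) must both be exploited to obtain the exponentially weighted $L^2$-bounds. All the other ingredients are standard once $S(\la)\phi_1=0$ has been identified distributionally.
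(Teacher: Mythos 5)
Your algebraic step and your elliptic-regularity step match the paper's proof in substance. The paper tests $(\cA-\la)\Phi$ against $\Psi\in C_0^\infty(\RR)\oplus C_0^\infty(\RR)$ and chooses $\psi_2=-(D_0^*-\ov{\la})^{-1}B_0^*\psi_1$ to obtain $\langle\phi_1,S_0(\la)^*\psi_1\rangle=0$; this is the weak-formulation version of your approximating-sequence argument, and the passage to $\phi_1\in H^m(\RR)$, $S(\la)\phi_1=0$ and then to $\phi_1=-L_\la\phi_1$ via the parametrix is identical to what you propose. (The only organizational difference is that the paper derives $\phi_2=-(D_0-\la)^{-1}C_0\phi_1$ after establishing the smoothness of $\phi_1$, whereas you obtain it distributionally first; both are fine.)

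The genuine gap is your final step. Conjugation by $e^{\delta\langle x\rangle}$ does not yield $e^{\delta\langle x\rangle}\phi_1\in L^2(\RR)$ under Assumptions (A), (B) alone: uniform ellipticity of the conjugated operator only gives an a priori estimate with an $L^2$-error term that is not localized to a compact set, so the bootstrap cannot be closed. Agmon-type decay requires that the operator be invertible near infinity, i.e.\ that the full symbol stay away from zero for real $\xi$ and large $|x|$ (in the notation of Section 5, that $P_\la^{\pm}$ have no real zeros); this is precisely \emph{not} assumed in the lemma, whose purpose is to treat $\la$ for which $0$ may lie in $\sess(S(\la))$. In fact no decay argument can succeed here: taking $b_\beta\equiv c_\gamma\equiv 0$, $d\equiv 0$ and $\tau_{11}=D^2+q$ with $q$ a Wigner--von Neumann potential produces a uniformly elliptic $S(\la)$ with $B^\infty$ coefficients satisfying (A), (B) whose $L^2$-kernel contains a function decaying only like $x^{-2}$. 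You should know that the paper's own proof glosses over the same point by asserting $\bigcap_{\alpha\in\RR}H^{\alpha}(\RR)=\sS(\RR)$, which is not a correct identity (membership in every $H^\alpha$ forces $\phi_1$ and all its derivatives to vanish at infinity, but not rapidly). What both arguments actually establish is $\phi_1\in\bigcap_{\alpha}H^{\alpha}(\RR)$ together with the two kernel identities, and only this weaker conclusion is used later (Corollary 3.5 and Theorem 3.6 need $C_0^*(D_0^*-\ov{\la})^{-1}g_j-f_j\in L^2(\RR)$, for which $H^k$-regularity suffices). So your instinct that the rapid decay is the delicate point is exactly right, but the proposed Agmon estimate would fail; the honest repair is to weaken \eqref{H-alpha-1} to membership in $\bigcap_{\alpha}H^{\alpha}(\RR)\oplus\bigcap_{\alpha}H^{\alpha}(\RR)$.
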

\begin{proof}
We give the proof of {\rm{\eqref{H-alpha-1}}} and the claim in (i) only; the claim in (ii) can be proven in the same way. Let $\Phi := (\phi_1, \phi_2)^t\in \Ker(\cA-\la)$ be arbitrary. Then, for all $\Psi:=(\psi_1,\psi_2)\in C_0^\infty(\RR)\oplus C_0^\infty(\RR)$, we have $\langle (\cA-\la)\Phi, \Psi\rangle=0$. Since $\Psi\in\Dom(\cA_0^*)$ it therefore follows that $\langle\Phi,(\cA_0^*-\ov{\la})\Psi\rangle=0$ or, equivalently,
	\begin{align}\label{id:ker-1}
	\langle\phi_1, (A_0^*-\ov\la)\psi_1+C_0^*\psi_2\rangle + \langle\phi_2, B_0^*\psi_1+(D_0^*-\ov\la)\psi_2\rangle = 0.
	\end{align}
Setting $\psi_2 \!=\! -(D_0^*-\ov{\la})^{-1}B_0^*\psi_1$ for $\psi_1\in C_0^\infty(\RR)$ in \eqref{id:ker-1}, we get $\langle\phi_1, S_0(\la)^*\psi_1\rangle \!=\!0$ for all $\psi_1\in C_0^\infty(\RR)$. 
Therefore, $\phi_1\in \Dom(S(\la))=H^m(\RR)$ and $S(\la)\phi_1=0$. Consequently, using the first relation in \eqref{parametrix-for-S}, we obtain $	(I+L_\la\,)\phi_1=S^{\rm{p}}(\la)S(\la)\phi_1=0$ or $\phi_1=-L_\la\,\phi_1$. Since $L_\la\,$ is a pseudo-differential operator with symbol from $S^{-\infty}(\RR^2)$, it follows that $\phi_1\in H^{\alpha}(\RR)$ for every $\alpha\in\RR$, see e.g.\ \cite{Wong-14b}. Hence 
\begin{equation}\label{phi1-in-Schwartz}
\phi_1\in\bigcap_{\alpha\in\RR}H^{\alpha}(\RR)=\sS(\RR).
\end{equation}

On the other hand, setting $\psi_1=0$ in \eqref{id:ker-1}, we obtain 
	\begin{align}\label{id:ker-2}
	\langle\phi_1, C_0^*\psi_2\rangle + \langle\phi_2, (D_0^*-\ov\la)\psi_2\rangle = 0
	\end{align}
for all $\psi_2\in C_0^\infty(\RR)$. Hence for $\psi_2=(D_0^*-\ov{\la})^{-1}\varphi$ with arbitrary $\varphi\in C_0^\infty(\RR)$, 
	\begin{align}\label{id:ker-3}
	\langle\phi_1, C_0^*(D_0^*-\ov{\la})^{-1} \varphi\rangle + \langle\phi_2, \varphi\rangle = 0.
	\end{align}
Because of $\phi_1 \in \sS(\RR)$ by \eqref{phi1-in-Schwartz}, we have $\langle (D_0-\la)^{-1}C_0\phi_1+\phi_2, \varphi\rangle=0$ and therefore the density of $C_0^\infty(\RR)$ in $L^2(\RR)$ yields $\phi_2=-(D_0-\la)^{-1}C_0\phi_1$.

Now \eqref{H-alpha-1} is easily seen to hold as $(D_0-\la)^{-1}C_0$ is a differential operator of order $k$ and hence also $\phi_2=-(D_0-\la)^{-1}C_0\phi_1\in\sS(\RR)$.
\end{proof}

\begin{corollary}\label{cor:kernel-smooth}
	Under the assumptions of Lemma~\rm{\ref{lem:kernel-smooth}}, we have
	\begin{align}\label{dimA-dimS}
	\df(\cA-\la)<\infty \quad  \Longleftrightarrow \quad \df(S(\la))<\infty.
	\end{align}
\end{corollary}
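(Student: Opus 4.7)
The plan is to establish a bijection between $\Ker(\cA^*-\ov\la)$ and $\Ker(S(\la)^*)$ via the first-coordinate projection, from which the deficiency equivalence will follow by combining standard Hilbert-space facts with the parametrix machinery.

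Lemma~\ref{lem:kernel-smooth}(ii) already provides an injective map $\iota\colon \Ker(\cA^*-\ov\la) \to \Ker(S(\la)^*)$, $(\phi_1,\phi_2)^t \mapsto \phi_1$; injectivity holds because $\phi_2$ is forced to equal $-(D_0^*-\ov\la)^{-1}B_0^*\phi_1$. To build the inverse $\kappa$, I start with $\phi_1\in \Ker(S(\la)^*)$ and first upgrade it to $\sS(\RR)$ by repeating the parametrix bootstrap from the proof of Lemma~\ref{lem:kernel-smooth}, now applied to $S(\la)^*$, which is itself uniformly elliptic with parametrix (the formal adjoint of $S^{\rm p}(\la)$). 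Setting $\phi_2 := -(D_0^*-\ov\la)^{-1}B_0^*\phi_1\in \sS(\RR)$, I verify $(\phi_1,\phi_2)^t\in \Dom(\cA^*) = \Dom(\cA_0^*)$ and $(\cA^*-\ov\la)(\phi_1,\phi_2)^t = 0$ by the integration-by-parts identity
\begin{equation*}
\langle (\cA_0-\la)(\psi_1,\psi_2), (\phi_1,\phi_2)\rangle
= \langle \psi_1, S(\la)^*\phi_1\rangle + \langle \psi_2, B_0^*\phi_1 + (D_0^*-\ov\la)\phi_2\rangle,
\end{equation*}
valid for $(\psi_1,\psi_2)\in \Dom(\cA_0)$; both summands vanish, the first since $\phi_1\in\Ker(S(\la)^*)$ and the second by construction of $\phi_2$. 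The maps $\iota$ and $\kappa$ are mutual inverses by inspection, so $\dim\Ker(\cA^*-\ov\la) = \dim\Ker(S(\la)^*)$.

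The equivalence then follows from the classical fact that $\df(T)<\infty$ for a closed densely defined $T$ on a Hilbert space is equivalent to $\Ran(T)$ being closed together with $\dim\Ker(T^*)<\infty$, in which case $\df(T) = \dim\Ker(T^*)$. The main obstacle I anticipate is transferring the \emph{closed-range} property between $\cA-\la$ and $S(\la)$, since finite-dimensional adjoint kernel does not by itself close the range on the line. I would handle this via the parametrix: once $\dim\Ker(S(\la)^*)$ is finite, the smoothing remainder in $S(\la)S^{\rm p}(\la)=I+R_\la$ can be corrected by the finite-rank orthogonal projection onto $\Ker(S(\la)^*)$ to produce a genuine right approximate inverse of $S(\la)$ dual to Proposition~\ref{prop:left.apprx.inv}, forcing $\Ran(S(\la))$ to be closed; the analogous construction for $\cA-\la$ is assembled blockwise from $S^{\rm p}(\la)$, $(D_0-\la)^{-1}$, and the operators $F_1(\la),F_2(\la)$, using Assumption~\ref{ass:B} to keep everything well-defined on $L^2(\RR)$.
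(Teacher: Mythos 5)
Your core argument is the same as the paper's. The paper proves the corollary precisely by showing that the first-component projection sets up a dimension-preserving correspondence between $\Ker(\cA^*-\ov\la)$ and $\Ker(S(\la)^*)$ (phrased there as two linear-independence counting arguments rather than as an explicit pair of mutually inverse maps $\iota$, $\kappa$), using Lemma~\ref{lem:kernel-smooth}(ii) in one direction and the bootstrap $\phi_{0,1}\in\sS(\RR)$ plus the integration-by-parts identity in the other. Your construction of $\kappa$ is if anything slightly more explicit than the paper's.

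The one place you go beyond the paper is the final paragraph on transferring range-closedness, and there your proposed mechanism does not work. The obstruction to upgrading the parametrix identity $S(\la)S^{\rm p}(\la)=I+R_\la$ to a right \emph{approximate} inverse is exactly that $R_\la$ is smoothing but \emph{not compact} on $L^2(\RR)$ (this is the point the paper emphasizes just before Lemma~\ref{lem:param-vs-apprx.inv}): non-compactness of $R_\la$ comes from lack of decay in the spatial variable, and adding or subtracting the finite-rank projection onto $\Ker(S(\la)^*)$ cannot turn $I+R_\la$ into identity-plus-compact. So no right approximate inverse, and hence no closed-range conclusion, is obtained this way. Fortunately this step is not what the corollary is carrying in the paper: the proof of the corollary works entirely with the adjoint nullities (note that on whichever side $\df<\infty$ is \emph{assumed}, the range is automatically closed, since a closed operator whose range has finite algebraic codimension has closed range), and in its only application, Theorem~\ref{thm:Schur-complement}, the corollary is always paired with Proposition~\ref{prop:left.apprx.inv}, which independently supplies closed range and finite nullity for the operator whose Fredholmness is being deduced. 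So the correct move is simply to drop the attempted right-inverse construction and let the kernel bijection do all the work, as the paper does.
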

\begin{proof}
``$\Longrightarrow$" in \eqref{dimA-dimS}: Suppose that $\df(\cA-\la)=N<\infty$ and let $\{\phi_n\}^N_{n=1}$, $\phi_n:=(\phi_{n,1}, \phi_{n,2})^t$, be an orthonormal basis for $\Ran(\cA-\la)^\bot=\Ker(\cA^*-\ov\la)$. Then by Lemma~\ref{lem:kernel-smooth} (ii), we have $\phi_{n,1} \in \Ker(S(\la)^*)$ and $\phi_{n,2}=-(D_0^*-\ov{\la})^{-1}B_0^*\phi_{n,1}$ for all $n\in\{1,\ldots,N\}$. If $\sum_{n=1}^Nc_n\phi_{n,1}=0$ for some constants $c_1,\ldots,c_N$, then 
	\begin{align*}
	\sum_{n=1}^Nc_n\phi_{n,2}=-\sum_{n=1}^Nc_n (D_0^*-\ov{\la})^{-1}B_0^*\phi_{n,1}=-(D_0^*-\ov{\la})^{-1}B_0^*\Bigl(\sum_{n=1}^Nc_n\phi_{n,1}\Bigr)=0
	\end{align*}
and hence $\sum_{n=1}^Nc_n\phi_n=0$. Since $\{\phi_n\}_{n=1}^N$ is a basis, we get $c_n=0$, $n\in\{1,\ldots,N\}$. Hence $\{\phi_{n,1}\}_{n=1}^N$ are linearly independent and thus $\dim\Ker(S(\la)^*)\geq N$. 
	
If $\dim\Ker(S(\la)^*) \geq N+1$, then there would exist $\phi_{0,1}\in\Ker(S(\la)^*)$ such that 
$\{\phi_{n,1}\}^N_{n=0}$ are linearly independent. Since $\phi_{0,1}\in\Ker(S(\la)^*)$, it would then follow that $\phi_{0,1}\in \sS(\RR)$, $\phi_0:=(\phi_{0,1}, -(D_0^*-\ov{\la})^{-1}B_0^*\phi_{0,1})^t \in \Ker(\cA^*-\ov{\la})$. Hence $\{\phi_n\}^N_{n=0}$ 
would be linearly independent, contradicting 
$\df(\cA-\la)=N$. Therefore, 	
\[
\df(S(\la))=\dim\Ker(S(\la)^*) = N.
\]
	
\smallskip
\noindent
``$\Longleftarrow$" in \eqref{dimA-dimS}: Suppose that $\df(S(\la)) \!=\! N<\infty$ and let $\{\phi_{n,1}\}_{n=1}^N$ be an orthonormal basis for $\Ran(S(\la))^\bot \!=\! \Ker(S(\la)^*)$. Consider $\{\phi_n\}^N_{n=1} \!\in\! \Ker(\cA^*-\ov{\la})$, where $\phi_n:=(\phi_{n,1}, \phi_{n,2})^t$ with $\phi_{n,2}:=-(D_0^*-\ov{\la})^{-1}B_0^*\phi_{n,1}$. These vectors must be linearly independent, for otherwise $\{\phi_{n,1}\}_{n=1}^{N}$ would be linearly dependent, contradicting  $\df(S(\la))=N$. Hence $\dim\Ker(\cA^*-\ov{\la}) \geq N$. 
 
If $\dim\Ker(\cA^*-\ov{\la}) \!\geq\! N+1$, then there would exist $\phi_0:=(\phi_{0,1},\phi_{0,2})^{t} \!\in\! \Ker(\cA^*-\ov{\la})$ such that $\{\phi_n\}^N_{n=0}$ are linearly independent. It would then follow from Lemma~\ref{lem:kernel-smooth} that $\phi_{0,1} \in \Ker(S(\la)^*)$ and $\phi_{0,2} = -(D_0^*-\ov{\la})^{-1}B_0^*\phi_{0,1}$. This would imply that $\{\phi_{n,1}\}_{n=0}^{N}$ are linearly independent and thus $\dim \Ker(S(\la)^*) \geq N+1$. This contradiction yields  
\[
\df(\cA-\la)=\dim\Ker(\cA^*-\ov{\la})=N.\qedhere
\]	
\end{proof}

A key result in the description of the essential spectrum due to the singularity at infinity is the following characterization in terms of the essential spectrum of the Schur complement.

\begin{theorem}\label{thm:Schur-complement}
Let Assumptions~\ref{ass:A}, \ref{ass:B} be satisfied. Then 
	\begin{equation}\label{Schur-equiv}
	\la\in\sess(\cA) \quad \Longleftrightarrow \quad 0\in\sess(S(\la)),
	\end{equation} 
provided that $\la\notin{\cl}\{\Delta(\RR)\}\cup{\cl}\{d(\RR)\}$. 
\end{theorem}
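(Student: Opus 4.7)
The plan is to establish the equivalence via the characterization of Fredholmness through left approximate inverses given by Proposition \ref{prop:left.apprx.inv}, combined with the deficiency equivalence already provided by Corollary \ref{cor:kernel-smooth}. Indeed, if in each direction of \eqref{Schur-equiv} I can exhibit a bounded left approximate inverse, then closed range and finite nullity will follow from Proposition \ref{prop:left.apprx.inv}, and Corollary \ref{cor:kernel-smooth} will then supply the missing finite deficiency. So the entire task reduces to constructing such approximate inverses.

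Suppose first that $S(\la)$ is Fredholm and let $S^\dagger(\la)$ be its generalized inverse. Motivated by the formal Frobenius--Schur block inversion, I would define
\[
\cR \defeq \begin{pmatrix} S^\dagger(\la) & -S^\dagger(\la) F_2(\la) \\ -F_1(\la) S^\dagger(\la) & F_1(\la) S^\dagger(\la) F_2(\la) + (D_0-\la)^{-1} \end{pmatrix}.
\]
A direct matrix computation on the core $\Dom(\cA_0)=C_0^\infty(\RR)\oplus C_0^\infty(\RR)$, using the Schur identity $(A_0-\la)-B_0 F_1(\la)=S_0(\la)$ on $C_0^\infty(\RR)$, the trivial identity $F_2(\la)(D_0-\la)=B_0$, and the defining relation \eqref{gen.inv-1}, collapses everything to
\[
\cR(\cA_0-\la)(u,v)=(u,v)+\bigl(-P_\la u,\, F_1(\la)P_\la u\bigr).
\]
Since $P_\la$ is finite-rank with $\Ran(P_\la)\subset\Ker(S(\la))\subset\sS(\RR)$ by Lemma \ref{lem:kernel-smooth}, the perturbation extends to a finite-rank (hence compact) operator on $\cH$. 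Proposition \ref{prop:left.apprx.inv} will then deliver closed range and finite nullity of $\cA-\la$, and Corollary \ref{cor:kernel-smooth} will complete Fredholmness of $\cA-\la$.

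The main obstacle in this step is the boundedness of $\cR$ on $\cH$: the entries $S^\dagger(\la)$, $(D_0-\la)^{-1}$ and $F_1(\la)S^\dagger(\la)$ are plainly bounded on $L^2(\RR)$ (the last because $S^\dagger(\la)$ maps into $H^m(\RR)\subset H^k(\RR)$), but the remaining entries involve $F_2(\la)$, which is a differential operator of order $n$ and hence unbounded on $L^2(\RR)$. I would handle this by substituting the parametrix representation \eqref{rep:param-vs-apprx.inv} for $S^\dagger(\la)$ and invoking the pseudo-differential calculus: the key compositions $S^{\rm{p}}(\la)F_2(\la)$, $F_1(\la)S^{\rm{p}}(\la)$ and $F_1(\la)S^{\rm{p}}(\la)F_2(\la)$ are pseudo-differential operators of orders $-k$, $-n$ and $0$ respectively, hence all bounded on $L^2(\RR)$; the remaining contributions involve either the smoothing operators $L_\la, R_\la$ or the finite-rank projections $P_\la, Q_\la$ (whose ranges lie in $\sS(\RR)$) and cause no difficulty. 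Assumption \ref{ass:B} is essentially used precisely at this point.

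For the converse, assume $\cA-\la$ is Fredholm with generalized inverse $(\cA-\la)^\dagger$ and set
\[
R\defeq P_1(\cA-\la)^\dagger\iota_1\in\mathscr{B}(L^2(\RR)),
\]
where $\iota_1 f:=(f,0)^t$ and $P_1(u,v):=u$, so that $R$ is automatically bounded. For $u\in H^m(\RR)=\Dom(S(\la))$ the Schur identity yields $(\cA-\la)(u,-F_1(\la)u)^t=(S(\la)u,0)^t=\iota_1 S(\la)u$, and applying $(\cA-\la)^\dagger$ while using \eqref{gen.inv-1} gives
\[
RS(\la)u=u-P_1\mathcal{P}(u,-F_1(\la)u)^t,
\]
where $\mathcal{P}$ is the orthogonal projection onto $\Ker(\cA-\la)$. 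Expanding $\mathcal{P}$ in any orthonormal basis $\{(\psi_j^{(1)},\psi_j^{(2)})\}$ of $\Ker(\cA-\la)\subset\sS(\RR)\oplus\sS(\RR)$ (Lemma \ref{lem:kernel-smooth}) and pushing $F_1(\la)$ onto the basis vectors via its formal adjoint, one finds $P_1\mathcal{P}(u,-F_1(\la)u)^t=\sum_j\langle u,\psi_j^{(1)}-F_1(\la)^*\psi_j^{(2)}\rangle\psi_j^{(1)}$, which extends to a finite-rank operator $K$ on $L^2(\RR)$. Hence $I-K$ extends $RS(\la)$ on its core $H^m(\RR)$, and Proposition \ref{prop:left.apprx.inv} together with Corollary \ref{cor:kernel-smooth} finishes the proof.
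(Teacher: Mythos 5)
Your proposal is correct and follows essentially the same route as the paper's proof: both directions reduce to exhibiting left approximate inverses (Proposition~\ref{prop:left.apprx.inv} plus Corollary~\ref{cor:kernel-smooth}), with the Frobenius--Schur block $\cA^{\#}(\la)$ made bounded via the parametrix identity of Lemma~\ref{lem:param-vs-apprx.inv} and the $L^2$-boundedness of the compositions $S^{\rm{p}}(\la)F_2(\la)$, $F_1(\la)S^{\rm{p}}(\la)$, $F_1(\la)S^{\rm{p}}(\la)F_2(\la)$, and with $P_1(\cA-\la)^\dagger\iota_1$ serving as the left approximate inverse of $S(\la)$ in the converse direction. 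The only differences are cosmetic (order of the two implications, working on $H^m(\RR)$ rather than the core $\sS(\RR)$).
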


\begin{proof} 
Let $\la \notin {\cl}\{\Delta(\RR)\}\cup{\cl}\{d(\RR)\}$ be fixed. In view of Proposition~\ref{prop:left.apprx.inv} and Corollary~\ref{cor:kernel-smooth}, it suffices to show that if one of $\cA-\la$ and $S(\la)$ is Fredholm, then the other has a left approximate inverse. 
	
First assume that $\cA-\la$ is Fredholm. Take an arbitrary $u\in\sS(\RR)$ and define $v:=-(D_0-\la)^{-1}C_0u \in \sS(\RR)$. Then, by the definition of $v$, we have
	\begin{equation}\label{lemschur:<=1}
	(\cA -\la) 
	\begin{pmatrix}
	u \\ v
	\end{pmatrix}
	=
	\begin{pmatrix}
	S(\la)u \\ 0
	\end{pmatrix}.
	\end{equation}
Let $\cA^\dagger(\la)$ be the generalized inverse of $\cA-\la$. Then $\cA^\dagger(\la)(\cA-\la)=I-\wh P_\la$ on $\Dom(\cA)$, where $\wh P_\la$ is the orthogonal projection onto $\Ker(\cA-\la)$, see \eqref{gen.inv-1}. Hence \eqref{lemschur:<=1} yields
	\begin{equation}\label{toshmat2}
	\begin{pmatrix}
	u \\ v
	\end{pmatrix}
	-\wh P_\la
	\begin{pmatrix}
	u \\ v
	\end{pmatrix}
	=\cA^\dagger(\la)(\cA-\la)
	\begin{pmatrix}
	u \\ v
	\end{pmatrix}
	=\cA^\dagger(\la)
	\begin{pmatrix}
	S(\la)u \\ 0
	\end{pmatrix}.
	\end{equation}
Note that $k:=\dim\Ker(\cA-\la)<\infty$ since $\cA-\la$ is Fredholm. Let $\{(f_j,g_j)^t\}_{j=1}^k$ be an orthonormal basis for $\Ker(\cA-\la)$. By \eqref{H-alpha-1}, we have 
\[
\varphi_j := C_0^*(D_0^*-\ov{\la})^{-1} g_j-f_j\in L^2(\RR),\quad j=1,2,\ldots,k.
\]
Hence the operator $\wh{K}(\la):L^2(\RR)\to L^2(\RR)$, defined by
\begin{align*}
\wh{K}(\la)f:=\sum_{j=1}^k\langle f,\varphi_j\rangle f_j, \quad f\in\Dom(\wh K_\la)=L^2(\RR),
\end{align*}	
is compact. Denoting by $P_1:L^2(\RR)\oplus L^2(\RR) \to L^2(\RR)$ the projection onto the first component and defining the bounded operator $S_\ell(\la):L^2(\RR)\to L^2(\RR)$ as
	\[
S_{\ell}(\la)f:=P_1\cA^\dagger(\la)\binom{f}{0}, \quad f\in\Dom(S_\ell(\la))=L^2(\RR),
	\]
one easily obtains from \eqref{toshmat2} that
	\[
	u+\wh{K}(\la)u=S_{\ell}(\la)S(\la)u.
	\]
Since $u\in \sS(\RR)$ was arbitrary and $\sS(\RR)$ is a core for $S(\la)$, it follows that $S_\ell(\la)$ is a left approximate inverse for $S(\la)$.

Now assume that $S(\la)$ is Fredholm. For $(f,g)^{t} \!\in\! \Ran(\cA_{0}-\la)\subset C_0^\infty(\RR)\oplus C_0^\infty(\RR)$, we have 
	\begin{align}\label{toshmat3}
	\begin{pmatrix}
	f\\g
	\end{pmatrix}
	=(\cA_{0}-\la) \begin{pmatrix}
	u\\v
	\end{pmatrix}
	\quad \Longleftrightarrow \quad 
	\begin{matrix}
	(A_0-\la)u+B_0v=f,\\
	C_0u+(D_0-\la)v=g,
	\end{matrix}
	\end{align}
with $(u,v)^{t}\in\Dom(\cA_{0})\subset C_0^\infty(\RR)\oplus C_0^\infty(\RR)$, and it follows that 
	\begin{equation}
	\label{Schur-u}
	S(\la)u=f-B_0(D_0-\la)^{-1}g=f-F_2(\la)g.
	\end{equation}
Let $S^\dagger(\la)$ be the generalized inverse of $S(\la)$. Applying $S^\dagger(\la)$ to \eqref{Schur-u} and using \eqref{gen.inv-1} we find
	\begin{equation}
	u=S^\dagger(\la)\,f-S^\dagger(\la)\,F_2(\la)g+\,P_\la u,
	\end{equation}
where $P_\la$ is the orthogonal projection onto $\Ker(S(\la))$, see \eqref{gen.inv-1}. Inserting this into the last equation in \eqref{toshmat3} and solving for $v$ we obtain
	\begin{align*}
	v = & \,(D_0-\la)^{-1}g-F_1(\la)S^\dagger(\la)\,f +F_1(\la)S^\dagger(\la)\,F_2(\la)g-F_1(\la)\,P_\la u.
	\end{align*}
Therefore,
	\begin{align}\label{apprx.inv-1}
	\begin{pmatrix}
	u\\v
	\end{pmatrix}
	=\cA^{\#}(\la) \begin{pmatrix}
	f\\g
	\end{pmatrix}
	-K(\la)  \begin{pmatrix}
	u\\v
	\end{pmatrix},
	\end{align}
where
	\begin{align}
	\cA^{\#}(\la) := \begin{pmatrix}
	S^\dagger(\la)\, & \quad -S^\dagger(\la)\,F_2(\la)\\[2ex]
	-F_1(\la)S^\dagger(\la)\, & \quad (D_0-\la)^{-1} + F_1(\la)S^\dagger(\la)\,F_2(\la)
	\end{pmatrix}, 
	\end{align}
with domain
		\[
		\quad \Dom(\cA^{\#}(\la)) := L^2(\RR)\oplus H^n(\RR),
		\]
and 
	\begin{align}
	K(\la) := \begin{pmatrix}
	-P_\la & \: 0\\[2ex]
	F_1(\la)P_\la & \: 0
	\end{pmatrix}, \quad \Dom(K(\la)):=L^2(\RR)\oplus L^2(\RR).
	\end{align}
It is not difficult to see that the operators $\cA^{\#}(\la)$ and $K(\la)$ are well-defined since both $\Ran(P_\la)$ and $\Ran(S^\dagger(\la))$ are subsets of $H^m(\RR)$ and the latter is contained in $H^s(\RR)=\Dom(F_1(\la)) \cap \Dom(F_2(\la))$ where $s=\min\{n,k\}\leq m$. Moreover, $K(\la)$ is a compact operator in $L^2(\RR)\oplus L^2(\RR)$ since $P_\la:L^2(\RR)\to L^2(\RR)$ is a finite-rank operator with $\Ran(P_\la)\subset\Ker(S(\la)) \subset \Dom(F_1(\la))$.
	
Since $\sS(\RR)\oplus \sS(\RR)$ is a core for $\cA-\la$, it is left to be shown that $\cA^{\#}(\la)$ has a bounded extension to $L^2(\RR)\oplus L^2(\RR)$. To this end, observe that with the help of Lemma~\ref{lem:param-vs-apprx.inv}, we have the decomposition $\cA^{\#}(\la)=\cA_1^{\#}(\la)+\cA_2^{\#}(\la)$ where
	\begin{align}
	\cA_1^{\#}(\la) :=& \begin{pmatrix}
	S^{\rm{p}}(\la) & \quad -S^{\rm{p}}(\la)F_2(\la)\\[2ex]
	-F_1(\la)S^{\rm{p}}(\la) & \quad (D_0-\la)^{-1} + F_1(\la)S^{\rm{p}}(\la)F_2(\la)
	\end{pmatrix},
	\end{align}
and
\begin{align}
	\cA_2^{\#}(\la) :=& \begin{pmatrix}
	H(\la) & \quad -H(\la)F_2(\la)\\[2ex]
	-F_1(\la)H(\la) & \quad  F_1(\la)H(\la)F_2(\la)
	\end{pmatrix},
	\end{align}
with 
\[
\Dom(\cA_1^{\#}(\la))=\Dom(\cA_2^{\#}(\la))=L^2(\RR)\oplus \Dom(F_2(\la))
\]
and
	\begin{align*}
	H(\la) := L_\la S^\dagger(\la)R_\la-L_\la(I-P_\la)S^{\rm{p}}(\la)-S^{\rm{p}}(\la)Q_\la.
	\end{align*}
First we justify that $\cA_1^{\#}(\la)$ has a bounded extension to $L^2(\RR)\oplus L^2(\RR)$. By \cite[Theorem 8.1]{Wong-14b}, $S^{\rm{p}}(\la)F_2(\la)$, $F_1(\la)S^{\rm{p}}(\la)$ and  $F_1(\la)S^{\rm{p}}(\la)F_2(\la)$ are pseudo-differential operators with symbols from $S^{-k}(\RR^2)$, $S^{-n}(\RR^2)$ and $S^0(\RR^2)$, respectively. Hence, by \cite[Theorem 12.9]{Wong-14b}, these operators have bounded extensions to $L^2(\RR)$ or are bounded in $L^2(\RR)$. Since $S^{\rm{p}}(\la)$ is bounded in $L^2(\RR)$, it follows that all the entries of $\cA_1^{\#}(\la)$ have bounded extensions to $L^2(\RR)$.  
	
The existence of a bounded extension of $\cA_2^{\#}(\la)$ to $L^2(\RR)\oplus L^2(\RR)$ can be shown similarly. Indeed, it is easy to see that $H(\la)$ has a bounded extension to $L^2(\RR)$. Furthermore, because $F_1(\la)L_\la$ and $F_1(\la)S^{\rm{p}}(\la)$ are pseudo-differential operators with symbols respectively in $S^{-\infty}(\RR^2)$ and $S^{-n}(\RR^2)$, it follows that $F_1(\la)H(\la)$ is a bounded operator on $L^2(\RR)$. In the same way, it follows that $H(\la)F_2(\la)$ has a bounded extension to $L^2(\RR)$. Finally, by the above observations and also noting that $R_\la\,F_2(\la)$ is a pseudo-differential operator with symbol in $S^{-\infty}(\RR^2)$, we conclude that the operator $F_1(\la)H(\la)F_2(\la)$ has a bounded extension to $L^2(\RR)$. Therefore, \cite[Theorem 12.9]{Wong-14b} again implies that $\cA_2^{\#}(\la)$ has a bounded extension to $L^2(\RR)\oplus L^2(\RR)$.
\end{proof}

\section{Essential spectrum due to the non-ellipticity \\in the sense of Douglis and Nirenberg}
\label{sec:DN-non-ellipticity}

For $\la\in\CC$, the operator matrix $\cA-\la$ is \emph{elliptic in sense of Douglis and Nirenberg} on the real line if and only if 
\[
	\det M_\la(x,\xi)\neq0, \quad x\in\RR, \: \xi\neq0,
\]
where $M_\la(x,\xi)$ is the principal symbol of $\cA-\la$, given by the matrix consisting of the principal symbols of the entries,
\begin{align}
	M_{\la}(x,\xi) :=
	\begin{pmatrix} 
		a_m\xi^m & b_n\xi^n \\[2ex]
		c_k\xi^k & d-\la 
	\end{pmatrix}, 
	\quad (x,\xi)\in\RR^2,
\end{align}
see e.g.\ \cite{Agranovich90}, \cite{Miranda-70b}. Observe that 
\begin{align}\label{det-M}
	\det M_\la(x,\xi) = a_m(x) \bigl(\Delta(x)-\la\bigr)\xi^m, \quad (x,\xi)\in\RR^2,
\end{align}
where the function $\Delta$ is given by \eqref{defDelta}.

Since $a_m \neq 0$ on $\RR$ by Assumption~\ref{ass:A}, it is clear from the relation in \eqref{det-M} that the ellipticity of $\cA-\la$ in sense of Douglis and Nirenberg is violated exactly for those $\la$ which lie in (the closure of) the range of the function $\Delta:\RR\to\CC$. Our goal in this section is to prove that such points belong to essential spectrum of $\cA$. 

\begin{theorem}\label{thm:reg.part}
	Let Assumption \ref{ass:A} be satisfied. Then 
	\begin{equation*}
	\{\la\in\CC:\, \cA-\la \; \textrm{is not Douglis-Nirenberg elliptic}\}
	\subset \sess (\cA). 
	\end{equation*}
\end{theorem}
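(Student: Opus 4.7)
Since $a_m$ is nowhere zero, formula \eqref{det-M} identifies $\{\la\in\CC:\cA-\la\text{ is not Douglis--Nirenberg elliptic}\}$ with $\Delta(\RR)$; fix $x_0\in\RR$ such that $\Delta(x_0)=\la$. The plan is to construct a singular Weyl sequence for $\cA-\la$, i.e.\ a sequence $\{\Phi_N\}\subset\Dom(\cA)$ with $\|\Phi_N\|_\cH=1$, $\Phi_N\rightharpoonup 0$ in $\cH$, and $(\cA-\la)\Phi_N\to 0$ in $\cH$. Since no Fredholm operator admits such a sequence (a weakly-null unit sequence has no norm-convergent subsequence, whereas closed range and finite-dimensional kernel would force one), this forces $\la\in\sess(\cA)$.

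\textbf{Wavepacket ansatz.} The form of the Weyl vector is dictated by the null direction of the principal symbol. Assuming first $b_n(x_0)\neq 0$---which holds on a neighbourhood of $x_0$ whenever $\la\notin\cl\{d(\RR)\}$---the identity $\Delta(x_0)=\la$ yields $\Ker M_\la(x_0,\xi)=\spn\{(1,-a_m(x_0)\xi^k/b_n(x_0))^t\}$ for each $\xi\neq 0$. Accordingly, with parameters $N\to\infty$ and $\eps_N\to 0$ to be chosen, I put
\[
\phi_N(x):=\eps_N^{-1/2}\chi\bigl((x-x_0)/\eps_N\bigr)\e^{\ii Nx},\qquad \psi_N(x):=-\tfrac{a_m(x_0)}{b_n(x_0)}N^k\phi_N(x),
\]
with $\chi\in C_0^\infty(\RR)$ a fixed unit-norm cutoff, and set $\Phi_N:=(\phi_N,\psi_N)^t/\|(\phi_N,\psi_N)^t\|_\cH$. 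Then $\Phi_N$ is a unit vector in $\cH$ tending weakly to zero by Riemann--Lebesgue. Using the commutation $D^\alpha(\eta\,\e^{\ii Nx})=\e^{\ii Nx}\sum_{j=0}^{\alpha}\binom{\alpha}{j}N^{\alpha-j}D^j\eta$ together with Taylor expansion of each coefficient at $x_0$ (so that its deviation on the support of $\phi_N$ is $O(\eps_N)$), the leading $N^m$-contribution to the first component of $(\cA-\la)\Phi_N$ is killed by the choice of $\psi_N$, and the leading $N^k$-contribution to the second is killed by the algebraic identity $c_k(x_0)b_n(x_0)=a_m(x_0)(d(x_0)-\la)$ that holds at $\la=\Delta(x_0)$. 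Since $(\phi_N,\psi_N)$ has norm $\sim N^k$ in $\cH$, the residual is bounded by
\[
\|(\cA-\la)\Phi_N\|_\cH\leq C\bigl(N^{n-1}+\eps_N N^n+\eps_N^{-1}N^{n-1}+N^{-1}+\eps_N+\eps_N^{-1}N^{-1}\bigr).
\]

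\textbf{Main obstacle and its resolution.} When $n=0$ the semiclassical choice $\eps_N=N^{-1/2}$ immediately gives $\|(\cA-\la)\Phi_N\|_\cH\to 0$. When $n\geq 1$ the stand-alone $N^{n-1}$ term (originating from the sub-principal symbol) and the $\eps_N^{-1}N^{n-1}$ term (from derivative corrections to $\chi$) cannot both be defeated by a single $\eps_N$; this is the main technical point. The fix I would pursue is to enrich the ansatz into a WKB-type expansion $\psi_N=\sum_{j=0}^{K}N^{k-j}\psi_N^{(j)}$ with $\psi_N^{(0)}=-(a_m(x_0)/b_n(x_0))\phi_N$, where each $\psi_N^{(j)}$ is a linear combination of $\phi_N,D\phi_N,\ldots,D^j\phi_N$ determined by the algebraic transport equation at $x_0$ that cancels the residual of order $N^{m-j}$ in the first component (solvable since $b_n(x_0)\neq 0$); the new second-component residuals they introduce are $O(N^{k-j})$ and disappear upon division by $N^k$. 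Taking $K>n$ then drives the residual below $\|\Phi_N\|_\cH$. A conceptually cleaner alternative, which entirely sidesteps this microlocal machinery, is to restrict $\cA$ to a compact interval $J\ni x_0$ with any regular boundary conditions, invoke the essential-spectrum characterisation for compact-interval matrix differential operators of \cite{ALMS94} to obtain $\la\in\sess$ together with a Weyl sequence localised at the interior point $x_0$, and then extend those functions by zero from $J$ to $\RR$. The excluded case $b_n(x_0)c_k(x_0)=0$ forces $\la\in\cl\{d(\RR)\}$ and is handled by an analogous wavepacket placed in the non-degenerate component.
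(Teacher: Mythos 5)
Your route is genuinely different from the paper's: the paper never constructs singular sequences, but instead splits $\RR$ into $(-\infty,a]\cup[a,b]\cup[b,\infty)$, identifies $\sess(\cB_{I_2})=\Delta([a,b])$ via \cite{ALMS94}, and transfers this to $\cA$ through the finite-dimensional-extension Propositions~\ref{prop:ALMS-O} and \ref{prop:fin.dim.ext} (which require Assumption~\ref{ass:C} and the numerical-range Lemma~\ref{lem:num.range}). A correct quasimode proof would be attractive -- it would establish the theorem under Assumption~\ref{ass:A} alone, as stated -- and your Weyl-criterion justification, kernel direction, and residual bookkeeping are sound. But the construction has a genuine gap exactly where you locate it, and the proposed fix does not close it. First, the terms $\eps_N N^n$ (coefficient variation over $\operatorname{supp}\phi_N$) and $\eps_N^{-1}N^{n-1}$ (cutoff derivatives) are jointly unbeatable for $n\ge1$: the former needs $\eps_N=o(N^{-n})$, the latter $\eps_N^{-1}=o(N^{1-n})$, i.e.\ $\eps_N\gg N^{n-1}\ge1$. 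Frozen-coefficient ``transport equations at $x_0$'' cannot remove $\eps_N N^n$; that requires variable amplitudes (e.g.\ $-a_m(x)/b_n(x)$ in place of $-a_m(x_0)/b_n(x_0)$) or Taylor corrections carrying $(x-x_0)^l$ factors. Second, to cancel the cutoff-derivative residuals one needs terms in $\psi_N$ proportional to $\e^{\ii Nx}D^l\bigl(\eps_N^{-1/2}\chi(\cdot/\eps_N)\bigr)$, which is $D^l\phi_N$ \emph{minus} its leading part $N^l\phi_N$; a constant-coefficient combination of $\phi_N,\dots,D^j\phi_N$ placed at weight $N^{k-j}$ instead contributes $b_nD^n(N^{k-j}D^j\phi_N)\sim N^{m}\phi_N$, i.e.\ it feeds back into the top order and destroys the leading cancellation, so the claimed triangular recursion is inconsistent. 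The correct object is a two-scale expansion $\psi_N=\e^{\ii Nx}\sum_j N^{k-j}g_j(x)$ with envelope-derivative amplitudes; for the paper's second example the working choice is $\psi_N=-\ii D\phi_N$, not $-\ii N\phi_N$. This can be carried out, but none of it is set up in your proposal.

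The two fallbacks also have gaps. The appeal to \cite{ALMS94} presumes that $\la\in\sess$ of the compact-interval operator comes with a singular sequence supported in the \emph{interior} of $J$; it does not. The result there is a Fredholm statement obtained through the second Schur complement, and for a non-self-adjoint operator $\la\in\sigma_{\rm e3}$ does not by itself produce a singular sequence; the sequences that do come out of that argument have first component of the form $-(A-\la)^{-1}Bv_N$, which spreads over all of $J$ and depends on the boundary conditions, so ``extend by zero'' is not available. Passing from the compact interval to the line is precisely the obstruction the paper isolates in its remark after the theorem and resolves with Propositions~\ref{prop:ALMS-O} and \ref{prop:fin.dim.ext}. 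Finally, in the degenerate case $b_n(x_0)=0$ the kernel of $M_\la(x_0,\xi)$ is spanned by $(0,1)^t$, and the corresponding vector $(0,\psi_N)$ has first component $\tau_{12}\psi_N$ of size $\eps_N^{-n}$, so ``an analogous wavepacket in the non-degenerate component'' does not work as stated either.
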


\begin{remark}
The proof of Theorem~\ref{thm:reg.part} is given below. Its analog was proven in \cite{ILLT13} in the symmetric case with $m=2$, $n=k=1$. The main tool for this was Glazman's decomposition principle combined with the result of \cite{ALMS94}. Recall that, by \cite{ALMS94}, if the compact interval $[0,1]$ is considered instead of $\RR$, then the essential spectrum of the closed operator generated by $\cA_0$ on a ``nice domain" over $[0,1]$ is given by $\Delta([0,1])$; of course the same holds for any compact interval $[a,b]$. Since our operator matrix is non-symmetric, this approach does not readily generalize to prove Theorem~\ref{thm:reg.part} as it is not obvious (in fact, it is a difficult problem) whether the deficiency indices of the corresponding minimal operator are finite. 	
\end{remark}

\subsection{Essential spectrum when the underlying domain is a compact interval}\label{subsec:preliminaries}
For given $a,b\in\RR$, we denote by $A$ the restriction of the differential expression $\tau_{11}(\mydot,D)$ to the domain determined by general boundary conditions
	\begin{align}\label{domain-of-A}
	\Dom(A):=\bigl\{y_1\in H^m(a,b):\, U(y_1)=0\bigr\}
	\end{align}
where 
	\begin{align*}
	U(y_1):=U_0\begin{pmatrix}
		y_1(a)\\y'_1(a)\\ \vdots\\y^{(m-1)}(a)
	\end{pmatrix}
	+U_1\begin{pmatrix}
		y_1(b)\\y'_1(b)\\ \vdots\\y^{(m-1)}(b)
	\end{pmatrix}
	\end{align*}
with $m\times m$ complex matrices $U_0, U_1\in M_{m}(\CC)$. We assume that the boundary-conditions are normalized and Birkhoff regular, see \cite{ALMS94} for more details. We denote by $B$ the restriction of the differential expression $\tau_{12}(\mydot,D)$ to the domain 
	\[
	\Dom(B):=\bigl\{y_2\in H^n(a,b):\, y_2^{(j)}(a)=y_2^{(j)}(b)=0,\,j=0,1,\ldots,n-1\bigr\}.
	\]
Furthermore, we denote by $C$, $D$ the restrictions of the differential expressions $\tau_{21}(\mydot,D)$, $\tau_{22}(\mydot,D)$ to the domains $\Dom(A)$, $\Dom(B)$, respectively. In the Hilbert space $L^2(a,b) \oplus L^2(a,b)$, we consider the operator matrix
	\[
	\text{L}_0:=\begin{pmatrix}
	A & B\\[1ex]
	C & D
	\end{pmatrix}, \qquad \Dom(\text{L}_0):=\Dom(A)\oplus\Dom(B).
	\]
Let $\cB_0$ denote the restriction of $\cA_0$ to $C_0^\infty(a,b)\oplus C_0^\infty(a,b)$. Since the domains of the adjoint operators to $\rm{L}_0$ and $\cB_0$ contain $C_0^\infty(a,b)\oplus C_0^\infty(a,b)$ and the latter is dense in $L^2(a,b)\oplus L^2(a,b)$, both $\rm{L}_0$ and $\cB_0$ are closable. Let $\rm{L}$ and $\cB$ denote the closures of $\rm{L_0}$ and $\cB_0$, respectively. Clearly, $\rm{L}$ is a closed extension of $\cB$. In fact, we have the following more precise result.

\begin{proposition}\label{prop:ALMS-O}
$\rm{L}$ is a finite-dimensional extension of $\cB$, that is,
	\begin{align}\label{prop:ALMS-0-find.dim}
	\dim \Dom(\rm{L}) / \Dom(\cB) <\infty.
	\end{align}
\end{proposition}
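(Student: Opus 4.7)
My plan is to produce an explicit $m$-dimensional complement of $\Dom(\cB)$ inside $\Dom(\mathrm{L})$, coming from the Birkhoff boundary conditions imposed on the first component. Since those conditions are normalized and Birkhoff regular, the quotient $\Dom(A)/H^m_0(a,b)$ has dimension exactly $m$; fix once and for all functions $\phi_1,\ldots,\phi_m\in\Dom(A)$ whose cosets form a basis of this quotient, and set $\Phi_j:=(\phi_j,0)\in\Dom(\mathrm{L}_0)$. The target is to show
\[
\Dom(\mathrm{L})=\Dom(\cB)+\spn\{\Phi_1,\ldots,\Phi_m\},
\]
which obviously implies \eqref{prop:ALMS-0-find.dim}.

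The starting observation is that $H^m_0(a,b)\oplus H^n_0(a,b)\subseteq\Dom(\cB)$: any element of this space is an $H^m\oplus H^n$-limit of elements of $C_0^\infty(a,b)\oplus C_0^\infty(a,b)$, and $\cA_0$ extends continuously to a bounded map $H^m\oplus H^n\to L^2\oplus L^2$, giving graph-norm convergence for $\cB_0$. Consequently, for any $(y_1,y_2)\in\Dom(\mathrm{L}_0)=\Dom(A)\oplus H^n_0(a,b)$ the splitting $y_1=\sum_{j=1}^m c_j\phi_j+z_1$ with $z_1\in H^m_0$ gives $(y_1,y_2)=\sum c_j\Phi_j+(z_1,y_2)$ with $(z_1,y_2)\in\Dom(\cB)$. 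To pass to the closure, take $(u,v)\in\Dom(\mathrm{L})$, choose $(u_n,v_n)\in\Dom(\mathrm{L}_0)$ with $(u_n,v_n)\to(u,v)$ in the graph norm of $\mathrm{L}$, and write $u_n=\sum_j c_{j,n}\phi_j+z_{1,n}$ with $z_{1,n}\in H^m_0$. If the scalar sequences $(c_{j,n})_n$ are bounded for every $j$, a Bolzano--Weierstrass argument extracts a subsequence along which $c_{j,n}\to c_j$; then $(z_{1,n},v_n)\in H^m_0\oplus H^n_0\subseteq\Dom(\cB)$ is graph-Cauchy for $\cB$ and closedness of $\cB$ gives $(u-\sum c_j\phi_j,v)\in\Dom(\cB)$, finishing the decomposition.

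The main obstacle is ruling out the alternative $t_n:=\max_j|c_{j,n}|\to\infty$. Assume this, set $\tilde c_{j,n}:=c_{j,n}/t_n$, and extract a subsequence with $\tilde c_{j,n}\to\tilde c_j$ and $\max_j|\tilde c_j|=1$. Dividing both $(u_n,v_n)$ and $\mathrm{L}_0(u_n,v_n)$ by $t_n$, then subtracting $\sum_j\tilde c_{j,n}\Phi_j$ and its image, we obtain $(z_{1,n}/t_n,v_n/t_n)\in H^m_0\oplus H^n_0\subseteq\Dom(\cB)$ which converges in $L^2\oplus L^2$ to $(-\sum_j\tilde c_j\phi_j,0)$ and whose $\cA_0$-images also converge. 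Closedness of $\cB$ yields $(\phi,0)\in\Dom(\cB)$ where $\phi:=-\sum_j\tilde c_j\phi_j\in H^m(a,b)$. The desired contradiction would follow, via the linear independence of the $\phi_j$ modulo $H^m_0$, from the rigidity lemma: \emph{if $(\phi,0)\in\Dom(\cB)$ and $\phi\in H^m(a,b)$, then $\phi\in H^m_0(a,b)$.}

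To prove this lemma, approximate $(\phi,0)$ by $(u_n,v_n)\in C_0^\infty(a,b)\oplus C_0^\infty(a,b)$ in the graph norm of $\cB$; distributional convergence identifies $\cB(\phi,0)=(\tau_{11}(\mydot,D)\phi,\tau_{21}(\mydot,D)\phi)\in L^2\oplus L^2$, which is in $L^2$ because $\phi\in H^m$. Pairing the relation $\cA_0(u_n,v_n)\to\cB(\phi,0)$ against an arbitrary test function $\psi\in C^\infty([a,b])$ in the first component, integration by parts on the $(u_n,v_n)$-side produces no boundary terms because $u_n,v_n\in C_0^\infty(a,b)$, while on the limit side the analogous manipulation against $\phi\in H^m$ introduces the Green boundary form associated with $\tau_{11}$ evaluated at the traces of $\phi$ at $a,b$. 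Comparing, this boundary form must vanish for every $\psi$. Non-degeneracy of the Green form for $\tau_{11}$, guaranteed by the ellipticity hypothesis $a_m\neq0$, then forces $\phi^{(j)}(a)=\phi^{(j)}(b)=0$ for $j=0,\ldots,m-1$, i.e.\ $\phi\in H^m_0(a,b)$, as claimed.
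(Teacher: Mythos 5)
Your proof is correct, but it takes a genuinely different route from the paper's. The paper argues abstractly: after a shift by $\mu$ chosen so that $\|S_2(\la)\|<|\mu|$, the Frobenius--Schur factorization shows $\la\in\rho(\mathrm{L}_\mu)$, hence $\la\in\Pi(\mathrm{L}_\mu)\cap\Pi(\cB_\mu)$ with $\nl(\mathrm{L}_\mu-\la)=\df(\mathrm{L}_\mu-\la)=0$; the codimension formula of \cite[Theorem~III.3.1]{EE87} then reduces everything to showing $\df(\cB_\mu-\la)<\infty$, which is obtained by trapping $\Ker(\cB_\mu^*-\ov\la)$ inside a graph over $\Ker(S_\mu(\la)^*)$ and invoking $\sess(S_\mu(\la))=\emptyset$. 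You instead exhibit an explicit complement, $\Dom(\mathrm{L})=\Dom(\cB)+\spn\{(\phi_j,0)\}_{j=1}^m$, using only (a) boundedness of the differential expression as a map $H^m\oplus H^n\to L^2\oplus L^2$ on a compact interval, which gives $H^m_0(a,b)\oplus H^n_0(a,b)\subseteq\Dom(\cB)$ with $\cB$ acting there by the expression, (b) closedness of $\cB$, and (c) the Lagrange--Green identity for $\tau_{11}$ to prove your rigidity lemma. The dichotomy on the coefficients $c_{j,n}$ and the blow-up argument are sound, and the rigidity lemma is the standard fact that the minimal operator of a regular $m$-th order expression with $a_m\neq0$ forces all $2m$ boundary traces to vanish (to decouple the boundary forms at $a$ and $b$ one should localize $\psi$ near each endpoint, but that is routine). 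What each approach buys: yours is elementary and self-contained, avoids the Schur complement and the spectral input from \cite{ALMS94} entirely, and yields the sharper quantitative bound $\dim\Dom(\mathrm{L})/\Dom(\cB)\leq m$; the paper's is less explicit, but its key intermediate estimate $\df(\cB_\mu-\la)\leq\df(S_\mu(\la))$ is recycled almost verbatim in the proof of Proposition~\ref{prop:fin.dim.ext} on the real line, where no analogue of your explicit finite-dimensional complement is available, which is presumably why the author routes the compact-interval case through the same machinery.
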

\begin{proof}
The operator $A$ with domain \eqref{domain-of-A} has compact resolvent, see \cite{ALMS94}. Consequently, $\sigma(A)$ consists exclusively of isolated eigenvalues of finite algebraic multiplicities. It is known from \cite[Theorem~4.3]{ALMS94} that, for $\la\in\rho(A)$, the operator 
	\[
	S_{2,0}(\la) := D-\la-C(A-\la)^{-1}B, \quad \Dom(S_{2,0}(\la)):=\Dom(B),
	\]
admits a bounded closure $S_2(\la):=\ov{S_{0,2}(\la)}$. From now on, we assume that $\la\in\rho(A)$ and $\mu\in\CC$ are chosen in such a way that 
\begin{align}
\la\notin\Delta([a,b]) \cup d([a,b]),
\end{align}
and 
\begin{align}\label{condition-on-la-mu}
\|S_2(\la)\| < |\mu|, \quad \mu+\la\notin d([a,b]);
\end{align}
such a choice of $\la$ and $\mu$ is possible because the sets $\Delta([a,b])$ and $d([a,b])$ correspond to curves of finite lengths in the complex plane. Consider the operators
	\begin{align*}
	\cB_{0,\mu} := \cB_0-
	 			\begin{pmatrix} 0 & 0\\[1ex]
	       		0 & \mu 
				\end{pmatrix}, 
				\quad 
	\rm{L}_{0,\mu} := \rm{L}_0-
					\begin{pmatrix} 0 & 0\\[1ex]
					0 & \mu  
					\end{pmatrix}
	\end{align*}
on the domains 
	\[
	\Dom(\cB_{0,\mu}) := \Dom(\cB_0), \quad \Dom(\rm{L}_{0,\mu}) := \Dom(\rm{L_0}).
	\]
Since closability and closedness are preserved under bounded perturbations, the closability of the operators $\cB_0$ and $\rm{L}_0$ implies that the operators $\cB_{0,\mu}$ and $\rm{L}_{0,\mu}$ are closable. Denoting their closures respectively by $\cB_\mu$ and $\rm{L}_\mu$, we have $\Dom(\cB_\mu)=\Dom(\cB)$ and $\Dom(\rm{L}_\mu)=\Dom(\rm{L})$. Hence \eqref{prop:ALMS-0-find.dim} is equivalent to
	\begin{align}\label{prop:ALMS-0-find.dim.1}
		\dim\, \Dom(\rm{L_\mu}) / \Dom(\cB_\mu) <\infty.
	\end{align}
By virtue of the Frobenius-Schur factorization of $\rm{L_{0,\mu}}$, we have 
	\begin{align}\label{Frobenius-Schur}
		\rm{L_\mu}-\la =\begin{pmatrix} 
			I & 0\\[2ex]
			G(\la) & I
		\end{pmatrix}
		\begin{pmatrix} 
			A-\la  & 0\\[2ex]
			0 & S_2(\la)-\mu 
		\end{pmatrix}
		\begin{pmatrix} 
			I & F(\la)\\[2ex]
			0 & I
		\end{pmatrix},
	\end{align}
where $G(\la):=C(A-\la)^{-1}$, which is everywhere defined and bounded (as a consequence of the closed graph theorem) operator in $L^2(\RR)$, and $F(\la)$ is the closure of $(A-\la)^{-1}B$.
	
Observe that, because $\la\in\rho(A)$ and $\mu\in\rho(S_2(\la))$, the middle term on the right-hand side of \eqref{Frobenius-Schur} is boundedly invertible. Clearly this is the case for the first and last terms as well. Hence $\la\in\rho(\rm{L_\mu})$. This observation, in particular, implies that $\la\in\Pi(\rm{L}_\mu)$.  Since $\cB_{\mu}$ is a restriction of $\rm{L}_\mu$, we get $\la\in\Pi(\cB_\mu)$. It is obvious that $\rm{L}_\mu-\la$ is Fredholm and hence the quantities $\df(\rm{L}_\mu-\la)$ and $\nl(\rm{L}_\mu-\la)$ are finite, the latter quantity being equal to zero. Therefore, if we show that 
	\begin{align}\label{prop:ALMS-0-find.dim.last.step}
		\df(\cB_\mu-\la)<\infty,
	\end{align}
then \cite[Theorem~III.3.1]{EE87} applies and gives \eqref{prop:ALMS-0-find.dim.1}, finishing the proof of the claim  in ~\eqref{prop:ALMS-0-find.dim}. 
	
Denote by $S_{0,\mu}(\la)$ the first Schur complement associated with the operator matrix $\cB_{0,\mu}-\la$. Since the domain of the adjoint of $S_{0,\mu}(\la)$ contains $C_0^\infty(a,b)$ and the latter is dense in $L^2(a,b)$, it follows that $S_{0,\mu}(\la)$ is closable; we denote its closure by $S_\mu(\la)$. 
Note that the coefficient functions of $S_{0,\mu}(\la)$ satisfy assumptions (i)-(iii) of \cite[p.445]{EE87} with $I=[a,b]$. Hence we have 
\begin{align}\label{dom-of-S-adjoint-[a,b]}
\Dom(S_\mu(\la)^*)=H^m(a,b),
\end{align}
see \cite[p.446]{EE87}. Furthermore, \cite[Lemma~IX.9.1]{EE87} yields $\sess(S_\mu(\la))=\emptyset$. In particular,  $\df(S_\mu(\la))<\infty$ and thus it suffices to prove that 
	\begin{align}\label{prop:ALMS-0-find.dim.last.step.1}
	\df(\cB_\mu-\la) \leq \df(S_\mu(\la)).
	\end{align}
To this end, let $\Phi=(\phi_1, \phi_2)^t\in\Ran(\cB_\mu-\la)^\bot = \Ker(\cB_\mu^*-\ov{\la})$ be arbitrary.
Then for all $\Psi=(\psi_1,\psi_2)^t\in C_0^\infty(a,b)\oplus C_0^\infty(a,b)$, we have $\langle \Psi, (\cB_\mu^*-\ov{\la})\Phi\rangle=0$. Since $\Psi\in\Dom(\cB_{0,\mu}-\la)$ it therefore follows that $\langle (\cB_{0,\mu}-\la)\Psi, \Phi\rangle=0$, or equivalently,
	\begin{align}\label{ALMS:id-ker-1}
		\langle (A_0-\la)\psi_1+B_0\psi_2, \phi_1 \rangle + \langle C_0\psi_1+(D_0-\mu-\la)\psi_2, \phi_2\rangle = 0.
	\end{align}
Setting $\psi_2 = -(D_0-\mu-\la)^{-1}C_0\psi_1$ with arbitrary $\psi_1\in C_0^\infty(a,b)$ in \eqref{ALMS:id-ker-1}, we obtain $\langle S_\mu(\la)\psi_1, \phi_1\rangle=0$ for all $\psi_1\in C_0^\infty(a,b)$. Hence $\phi_1\in \Dom(S_\mu(\la)^*)=H^m(a,b)$, see \eqref{dom-of-S-adjoint-[a,b]}, and $S_\mu(\la)^*\phi_1=0$, that is,  
	\begin{align}\label{ALMS:kernel-1}
	\phi_1\in \Ker(S_\mu(\la)^*).
	\end{align}
On the other hand, setting $\psi_1=0$ in \eqref{ALMS:id-ker-1}, we get for all $\psi_2\in C_0^\infty(a,b)$,
	\begin{align}\label{ALMS:id-ker-2}
	\langle B_0\psi_2, \phi_1\rangle + \langle (D_0-\mu-\la)\psi_2, \phi_2\rangle = 0.
	\end{align}
Letting $\psi_2 = (D_0-\mu-\la)^{-1}\varphi$ with arbitrary $\varphi\in C_0^\infty(a,b)$, we thus obtain
	\begin{align}\label{ALMS:id-ker-3}
	\langle B_0(D_0-\mu-\la)^{-1}\varphi, \phi_1 \rangle + \langle \varphi, \phi_2 \rangle = 0.
	\end{align}
Since $\phi_1 \in H^m(a,b) \subset \Dom((B_0(D_0-\mu-\la)^{-1})^*)$, we therefore have  
	\begin{align}\label{ALMS:id-ker-4}
	\langle \varphi, (B_0(D_0-\mu-\la)^{-1})^*\phi_1+\phi_2 \rangle=0.
	\end{align}
Since $\varphi\in C_0^\infty(a,b)$ was arbitrary, the density of $C_0^\infty(a,b)$ in $L^2(a,b)$ implies
	\begin{align}\label{ALMS:kernel-2}
	\phi_2 = -(B_0(D_0-\mu-\la)^{-1})^*\phi_1.
	\end{align}
Applying similar ideas as in the proof of Corollary~\ref{cor:kernel-smooth}, the claim in \eqref{prop:ALMS-0-find.dim.last.step.1} immediately follows from \eqref{ALMS:kernel-1} and \eqref{ALMS:kernel-2}.
\end{proof}

\begin{corollary}
	For the essential spectrum of $\cB$, we have
	\begin{equation}\label{ess.spec.cB}
	\sess(\cB)=\Delta([a,b]).
	\end{equation}
\end{corollary}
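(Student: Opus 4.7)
The plan is to read off the corollary from Proposition~\ref{prop:ALMS-O} combined with the compact-interval result of \cite{ALMS94} recalled in the remark following Theorem~\ref{thm:reg.part}. First I would verify the evident fact that $\cB \subset \rm{L}$: since $C_0^\infty(a,b)\oplus C_0^\infty(a,b) \subset \Dom(\rm{L}_0)$ and the actions of $\cB_0$ and $\rm{L}_0$ agree on this subspace, one has $\cB_0 \subset \rm{L}_0$, and taking closures preserves inclusion, so $\cB \subset \rm{L}$.

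Next I would invoke \cite{ALMS94}, which states that for normalized Birkhoff regular boundary conditions on the compact interval $[a,b]$, the closure $\rm{L}$ has essential spectrum exactly $\Delta([a,b])$ (this is the content of the result recalled in the remark after Theorem~\ref{thm:reg.part}, applied to a general compact interval $[a,b]$ in place of $[0,1]$).

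Finally, by Proposition~\ref{prop:ALMS-O}, $\rm{L}$ is a finite-dimensional extension of $\cB$, i.e.\ $\dim \Dom(\rm{L})/\Dom(\cB) < \infty$. It is a standard stability property of the essential spectrum $\sigma_{\rm e3}$ that it is invariant under finite-dimensional extensions of closed operators; more precisely, if $\cB \subset \rm{L}$ with finite-dimensional quotient of domains, then $\cB-\la$ is Fredholm if and only if $\rm{L}-\la$ is (see e.g.\ \cite[Theorem~IX.2.1]{EE87} or its proof). Consequently,
\begin{equation*}
\sess(\cB) = \sess(\rm{L}) = \Delta([a,b]),
\end{equation*}
which is the desired equality.

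I do not expect any serious obstacle here; the only point requiring mild care is a clean citation of the finite-dimensional extension stability of $\sigma_{\rm e3}$, since the corresponding statement is usually phrased for relatively compact or finite-rank resolvent-level perturbations rather than directly at the level of domains. Everything substantive has been done in the preceding proposition, and this corollary is essentially a bookkeeping step.
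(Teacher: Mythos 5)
Your argument is correct and is precisely the paper's proof: the paper deduces the corollary from \cite[Theorem~4.5]{ALMS94} (giving $\sess(\rm{L})=\Delta([a,b])$), Proposition~\ref{prop:ALMS-O}, and the invariance of $\sigma_{\rm e3}$ under finite-dimensional extensions, for which the paper cites \cite[Corollary IX.4.2]{EE87} — the clean reference you were looking for.
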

The proof of the claim \eqref{ess.spec.cB} is an immediate consequence of \cite[Theorem~4.5]{ALMS94} combined with \eqref{prop:ALMS-0-find.dim} and \cite[Corollary IX.4.2]{EE87}. 

\subsection{Numerical range of the first Schur complement}
In this subsection we establish a result on the numerical range of $S(\la)$. Here we need the following version of strong ellipticity. 

\begin{ass}\label{ass:C} 
Let Assumption~\ref{ass:B} be satisfied except for \ref{ass:Schur-2} being replaced by the condition that there exist constants $\theta_\la\in[0,\pi]$ and $\delta_\la>0$ such that 
	\begin{align}\label{strong-ellipticity}
	\Re({\rm{e}}^{\ii\theta_\la}p_{m}(x,\la)) \geq \delta_\la, \quad x\in\RR.
	\end{align}
\end{ass}

\begin{lemma}\label{lem:num.range}
Let Assumptions~\ref{ass:A}, \ref{ass:C} be satisfied and $\la\!\notin\!{\cl}\{\Delta(\RR)\} \cup {\cl}\{d(\RR)\}$. Then the closure of the numerical range $W(S(\la))$ of $S(\la)$ is contained in a sector of the complex plane with a semi-angle $\vartheta\in[0,\frac{\pi}{2})$. Moreover,
	\[
	\CC\setminus {\cl}\{W(S(\la))\} \subset \rho(S(\la)).
	\]
\end{lemma}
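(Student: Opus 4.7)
The plan is to exploit Assumption~\ref{ass:C} via integration by parts together with Gårding-type estimates and Sobolev interpolation, working with the rotated operator $T_\la := \e^{\ii\theta_\la} S(\la)$ whose leading coefficient $\e^{\ii\theta_\la}p_m(\mydot,\la)$ has real part bounded below by $\delta_\la$. Since $\mathscr S(\RR)$ is a core for $S(\la)$ by \eqref{dom-of-S}, it suffices to establish all bounds for $u\in\mathscr{S}(\RR)$. Writing $m=2\ell$ and using the formal self-adjointness of $D=-\ii\,\d/\d x$, repeated integration by parts yields
\begin{equation*}
\int_\RR \e^{\ii\theta_\la}p_m(x,\la)(D^mu)(x)\overline{u(x)}\,\d x
= \int_\RR \e^{\ii\theta_\la}p_m(x,\la)|(D^\ell u)(x)|^2\,\d x + R_0(u),
\end{equation*}
where $R_0(u)$ is a sum of terms in which at least one $D$ falls on $p_m$, hence a linear combination of forms of the type $\int q_r(x)(D^{r}u)\overline{D^{r'}u}\,\d x$ with $r+r'\le m-1$ and $q_r\in B^\infty(\RR,\CC)$ (using \ref{ass:Schur-1}).

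Next, I would estimate all subleading contributions using Cauchy--Schwarz combined with the interpolation inequality $\|D^ru\|\le C\|u\|_{H^\ell}^{r/\ell}\|u\|^{1-r/\ell}$ for $0\le r\le\ell$, followed by Young's inequality to absorb factors of $\|u\|_{H^\ell}$ at the price of a large multiple of $\|u\|^2$. This yields, for every $\varepsilon>0$,
\begin{align*}
\Re\langle T_\la u,u\rangle &\ge (\delta_\la-\varepsilon)\|D^\ell u\|^2-K_1(\varepsilon)\|u\|^2,\\
|\Im\langle T_\la u,u\rangle| &\le (M_\la+\varepsilon)\|D^\ell u\|^2+K_2(\varepsilon)\|u\|^2,
\end{align*}
with $M_\la:=\sup_{x\in\RR}|\Im(\e^{\ii\theta_\la}p_m(x,\la))|<\infty$. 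Eliminating $\|D^\ell u\|^2$ between the two inequalities and normalizing $\|u\|=1$ gives
$|\Im\langle T_\la u,u\rangle|\le\tan\vartheta\bigl(\Re\langle T_\la u,u\rangle+K_3\bigr)$
for some $\vartheta\in[0,\pi/2)$ depending only on $M_\la/\delta_\la$. Hence $\cl\{W(T_\la)\}$, and therefore $\cl\{W(S(\la))\}$ after rotation by $\e^{-\ii\theta_\la}$, lies in a closed sector with semi-angle $\vartheta<\pi/2$ and an appropriate apex.

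For the inclusion $\CC\setminus\cl\{W(S(\la))\}\subset\rho(S(\la))$, I invoke the classical fact (Kato's Perturbation Theory, Thm.~V.3.2) that for any closed densely defined operator $T$ one has $\CC\setminus\cl\{W(T)\}\subset\Pi(T)$, so $S(\la)-\mu$ has closed range and zero nullity for all $\mu$ outside the sector. To upgrade from the regularity field to the resolvent set, I would run the same integration-by-parts argument for the formal adjoint, whose leading coefficient is $\overline{p_m(\mydot,\la)}$ and which therefore satisfies the strong ellipticity condition with $\theta_\la$ replaced by $-\theta_\la$; this shows $\cl\{W(S(\la)^*)\}$ is also contained in a proper sector, namely the complex conjugate of the first one. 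Consequently, for $\mu\notin\cl\{W(S(\la))\}$ we have $\bar\mu\notin\cl\{W(S(\la)^*)\}$, so $S(\la)^*-\bar\mu$ is injective, and $\Ran(S(\la)-\mu)=\Ker(S(\la)^*-\bar\mu)^{\bot}=L^2(\RR)$, giving $\mu\in\rho(S(\la))$.

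The main obstacle is the careful bookkeeping in Step~1--2: identifying the commutator remainder $R_0(u)$ explicitly enough to see that every summand is a bounded bilinear form of order strictly less than $m$ in $u$, so that the interpolation/Young step really does absorb it into a small multiple of $\|u\|_{H^\ell}^2$ plus $\|u\|^2$. Once this is done cleanly, the sector estimate is automatic, and the resolvent statement follows from standard functional analytic machinery without further work.
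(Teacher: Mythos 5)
Your argument for the sector estimate is essentially the paper's own: halve the order by integration by parts, control the sub-leading terms by Ehrling/interpolation plus Young, and absorb them into a small multiple of $\|D^{m/2}u\|^2$ plus a large multiple of $\|u\|^2$; the paper does exactly this with $\phi\in\sS(\RR)$ and concludes sectoriality with semi-angle $\vartheta<\pi/2$. The first half of your second part (closed range and zero nullity outside $\cl\{W(S(\la))\}$ via Kato/\cite[Theorem~III.2.3]{EE87}) also coincides with the paper.

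There is, however, a genuine inferential gap in your last step. From the fact that $\cl\{W(S(\la)^*)\}$ is contained in the \emph{conjugate sector} you conclude that $\ov{\mu}\notin\cl\{W(S(\la)^*)\}$ for every $\mu\notin\cl\{W(S(\la))\}$. That does not follow: a point $\mu$ can lie outside $\cl\{W(S(\la))\}$ while $\ov{\mu}$ still lies inside the conjugate sector, and sector containment alone says nothing about such points. What you actually need is the identity $W(S(\la)^*)=\ov{W(S(\la))}$, which holds here because uniform ellipticity (Assumption \ref{ass:C} implies \ref{ass:Schur-2}) gives $\Dom(S(\la)^*)=H^m(\RR)=\Dom(S(\la))$, so that $\langle S(\la)^*\psi,\psi\rangle=\ov{\langle S(\la)\psi,\psi\rangle}$ for all $\psi$ in the common domain; this is the route the paper takes, deducing $\df(S(\la)-\omega)=\nl(S(\la)^*-\ov{\omega})=0$ directly. (Alternatively, one can repair your version by a constancy-of-deficiency argument on the connected set $\CC\setminus\cl\{W(S(\la))\}$, checking surjectivity only for one $\mu$ outside both sectors — but some such additional input is required; the sector bound on $W(S(\la)^*)$ by itself is not enough.)
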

\begin{proof}
Let $m_1=m/2$ and take an arbitrary $\mu\in W(S_{0}(\la))$. Then $\mu=\langle S_{0}(\la)\phi,\phi\rangle$ for some $\phi\in\Dom(S_{0}(\la))=\sS(\RR)$ with $\|\phi\|_{L^2(\RR)}=1$ and hence
	\begin{align} 
	\nonumber
	\mu = \sum_{j=0}^m \bigl\langle (-\ii)^j p_j(\mydot,\la) \phi^{(j)},\phi\rangle = &\sum_{j=m_1}^m(-1)^{m_1}\bigl\langle (-\ii)^j\phi^{(j-m_1)}, (\ov{p_j(\mydot,\la)}\phi)^{(m_1)}\bigr\rangle \\\label{mu=}
	 &+ \sum_{j=0}^{m_1-1}\bigl\langle(-\ii)^j p_j(\mydot,\la)\phi^{(j)},\phi\bigr\rangle.
	\end{align}
By Erhling's inequality (see e.g.\ \cite{Wong-14b}), for any $\varepsilon_0>0$ there is a constant $K_{\varepsilon_0}>0$ depending on $\varepsilon_0$ only and such that
	\begin{equation}
	\|\phi^{(j)}\|_{L^2(\RR)} \leq \varepsilon_0\|\phi^{(m_1)}\|_{L^2(\RR)} + K_{\varepsilon_0}\|\phi\|_{L^2(\RR)}, \quad j=0,1,\ldots, m_1-1.  
	\end{equation}
Using these estimates together with the Cauchy-Schwartz inequality in \eqref{mu=} shows that for any $\varepsilon>0$, 
	\begin{align*}
	\bigl|{\rm{e}}^{\ii\theta_\la}\mu \clr{-} \bigl\langle\phi^{(m_1)}, {\rm{e}}^{-\ii\theta_\la}\ov{p_m(\mydot,\la)}\phi^{(m_1)}\bigr\rangle\bigr| 
	 \leq \varepsilon\|\phi^{(m_1)}\|^2_{L^2(\RR)}+K_{\varepsilon}\|\phi\|^2_{L^2(\RR)}
	\end{align*}
for some constant $K_{\varepsilon}>0$. Here it is taken into account that the functions $p_j(\mydot,\la)$, $j\in\{0,1,\ldots,m\}$ and their derivatives up to order $m_1$ are bounded on $\RR$. Therefore, choosing $\varepsilon=\delta_\la/2$ with $\delta_\la$ as in Assumption~\ref{ass:C}, we obtain
	\begin{align*}
	\Re({\rm{e}}^{\ii\theta_\la}\mu)+K_{\delta_\la} &\geq \int_{\RR}\Bigl(\Re({\rm{e}}^{\ii\theta_\la}p_m(\mydot,\la))-\frac{\delta_\la}{2}\Bigr)|\phi^{(m_1)}|^2 \, {\rm{d}}x \\
	&\geq \frac{\delta_\la}{2}\|\phi^{(m_1)}\|^2_{L^2(\RR)}\geq0.
	\end{align*}
Furthermore, for some positive constant $\gamma_\la$,
	\begin{align}
	|\Im({\rm{e}}^{\ii\theta_\la}\mu)| \leq \gamma_\la\bigl(\|\phi^{(m_1)}\|^2_{L^2(\RR)}+\|\phi\|^2_{L^2(\RR)}\bigr) \leq \frac{2\gamma_\la}{\delta_\la}\bigl(\Re({\rm{e}}^{\ii\theta_\la}\mu)+K_{\delta_\la}\bigr).
	\end{align}
This shows that $S_{0}(\la)$ is sectorial with $W(S_{0}(\la))$ contained in a sector with semi-angle $\vartheta\in[0,\frac{\pi}{2})$ from which the first claim follows.

Next, let $\omega\notin{\cl}\{W(S(\la))\}$ be arbitrary. By \cite[Theorem~III.2.3]{EE87}, we know that $\nl(S(\la)-\omega)=0$ and $\Ran(S(\la)-\omega)$ is closed. Note that $\mu\in W(S(\la))$ if and only if $\ov{\mu}\in W(S(\la)^*)$. So $\ov{\omega}\notin {\cl}\{W(S(\la)^*)\}$ and, consequently, 
\[
\df(S(\la)-\omega I)=\nl(S(\la)^*-\ov{\omega}I)=0.
\]
Therefore, any $\omega\notin{\cl}\{W(S(\la))\}$ lies in the resolvent set $\rho(S(\la))$.
\end{proof}

\subsection{Some more auxiliary results}\label{sub.sec:mor.aux.results}

For given $a,b\in\RR$, $a<b$ let $I_1=(-\infty,a]$, $I_2=[a,b]$ and $I_3=[b,\infty)$. For each $j=1,2,3$, denote by $\cB_{I_j}$ and $S_{I_j}(\la)$ respectively the closure of the restriction of the operator matrix $\cA_0$ to 
$C_0^\infty(I_j)\oplus C_0^\infty(I_j)$ and the closure of the corresponding first Schur complement (the closability of these restrictions can be easily seen from the fact that the corresponding adjoint operators have dense domains). 
Then $\cA$ is a closed extension of the orthogonal sum 
\begin{equation}
T:=\cB_{I_1} \oplus \cB_{I_2} \oplus \cB_{I_3}, \quad \Dom(T):=\Dom(\cB_{I_1}) \oplus \Dom(\cB_{I_2}) \oplus \Dom(\cB_{I_3}).
\end{equation}

\begin{lemma}\label{lem:reg.field}
Let Assumptions~\ref{ass:A}, \ref{ass:B} be satisfied and let $\la\!\notin\!{\cl}\{\Delta(\RR)\} \cup {\cl}\{d(\RR)\}$. If $0\notin\sess(S(\la))\cup\spt(S(\la))$, then $\la\in\Pi(T)$. 
\end{lemma}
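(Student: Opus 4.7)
The plan is to reduce the assertion to its whole-line counterpart. The key observation, recorded in the paragraph preceding the lemma, is that $\cA$ is already a closed extension of $T$, so $\Dom(T)\subset\Dom(\cA)$ and $Tw=\cA w$ for every $w\in\Dom(T)$. Consequently any lower bound established for $\cA-\la$ on $\Dom(\cA)$ will transfer verbatim to $T-\la$ on $\Dom(T)$, so it suffices to prove $\la\in\Pi(\cA)$.

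To prove $\la\in\Pi(\cA)$, I would combine the two main results of Section~\ref{sec:ess.spec.due.infty}. Since $\la\notin\cl\{\Delta(\RR)\}\cup\cl\{d(\RR)\}$ and $0\notin\sess(S(\la))$, Theorem~\ref{thm:Schur-complement} yields $\la\notin\sess(\cA)$, so $\cA-\la$ is Fredholm. Next, Lemma~\ref{lem:kernel-smooth}(i) forces every $(\phi_1,\phi_2)^t\in\Ker(\cA-\la)$ to satisfy $\phi_1\in\Ker(S(\la))$ and $\phi_2=-(D_0-\la)^{-1}C_0\phi_1$; the second hypothesis $0\notin\spt(S(\la))$ therefore gives $\Ker(\cA-\la)=\{0\}$. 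A closed, injective, Fredholm operator is automatically bounded below: its restriction $\cA-\la:\Dom(\cA)\to\Ran(\cA-\la)$ is a closed bijection onto the closed subspace $\Ran(\cA-\la)\subset\cH$, and the closed graph theorem applied to the inverse supplies a constant $c_\la>0$ with
\[
\|(\cA-\la)w\|_{\cH}\geq c_\la\|w\|_{\cH},\qquad w\in\Dom(\cA).
\]
Hence $\la\in\Pi(\cA)$, and a fortiori $\la\in\Pi(T)$ in view of $T\subset\cA$.

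In this framing the lemma is essentially a corollary of Theorem~\ref{thm:Schur-complement} combined with Lemma~\ref{lem:kernel-smooth}(i) and the elementary implication \emph{closed $+$ Fredholm $+$ injective $\Rightarrow$ bounded below}; no genuinely new estimate is required. The only point worth flagging is the role of the extra hypothesis $0\notin\spt(S(\la))$: absent from the characterization in Theorem~\ref{thm:Schur-complement}, it is precisely what upgrades Fredholmness of $\cA-\la$ to the bounded-below property demanded by membership in the regularity field $\Pi(T)$, as opposed to membership in the mere complement of $\sess(T)$.
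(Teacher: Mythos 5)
Your argument is correct and follows essentially the same route as the paper: Theorem~\ref{thm:Schur-complement} gives Fredholmness of $\cA-\la$, Lemma~\ref{lem:kernel-smooth}(i) together with $0\notin\spt(S(\la))$ gives injectivity, and then one passes from the resulting lower bound for $\cA-\la$ to $T-\la$ via the inclusion $T\subset\cA$. The paper packages the step ``closed $+$ Fredholm $+$ injective $\Rightarrow$ bounded below'' as the inclusion $\sapp(\cA)\subset\sess(\cA)\cup\spt(\cA)$ and the monotonicity $\sapp(T)\subset\sapp(\cA)$, which is exactly what your closed-graph-theorem argument establishes.
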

\begin{proof}
Let $\la$ be as in the hypothesis. By Lemma~\ref{lem:kernel-smooth} and Theorem~\ref{thm:Schur-complement}, 
	\[
	\la\notin \sess(\cA)\cup\spt(\cA).
	\]
Denote by $\sapp(\cA)$ the approximate point spectrum of $\cA$. Because of the inclusion $\sapp(\cA)\subset\sess(\cA)\cup\spt(\cA)$, we thus obtain $\la\notin\sapp(\cA)$. Therefore,  
	\[
	\la\in\CC\setminus\sapp(\cA) \subset \CC\setminus\sapp(T)=\Pi(T),
	\]
where the inclusion is obvious since $T$ is a restriction of $\cA$.
\end{proof}

\begin{proposition}\label{prop:fin.dim.ext}
Let Assumptions~\ref{ass:A}, \ref{ass:C} be satisfied. Then $\cA$ is a finite-dimensional extension of $T$, that is,
	\begin{align}
	\dim \, \Dom(\cA) / \Dom(T)<\infty.
	\end{align}
\end{proposition}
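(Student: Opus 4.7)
The strategy parallels that of Proposition~\ref{prop:ALMS-O}: I plan to exhibit a single $\la_0\in\CC$ which is simultaneously a point of the regularity field of $T$ and a Fredholm point of $\cA$. Since $T$ is a closed restriction of $\cA$ by construction, \cite[Theorem~III.3.1]{EE87} applied to the pair $(T,\cA)$ at $\la_0$ then yields
\[
\dim\Dom(\cA)/\Dom(T)\leq\df(\cA-\la_0)+\nl(T-\la_0)<\infty.
\]

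The key analytic step is to locate $\la_0\in\CC\setminus(\cl\{\Delta(\RR)\}\cup\cl\{d(\RR)\})$ with $0\in\rho(S(\la_0))$. I would take $\la_0=\ii t$ with $|t|$ large; along this ray the coefficient $p_0(\mydot,\la_0)$ in~\eqref{Schur.coeff} contains the explicit summand $-\la_0$, while every other coefficient $p_j(\mydot,\la_0)$, $1\leq j\leq m$, stays uniformly bounded in $B^\infty(\RR,\CC)$ as $|t|\to\infty$ (indeed $(d-\la_0)^{-1}\to 0$ uniformly and $p_m(\mydot,\la_0)\to a_m$). Repeating the integration-by-parts computation behind Lemma~\ref{lem:num.range} while separating the dominant term $-\la_0\|u\|_{L^2}^2$ from $\langle S(\la_0)u,u\rangle$, I expect a coercive estimate
\[
\|S(\la_0)u\|_{L^2}\geq c\,|t|\,\|u\|_{L^2},\qquad u\in H^m(\RR),
\]
for all sufficiently large $|t|$. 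Since the formal adjoint of $\tau_S(\la_0)$ satisfies analogous hypotheses, the same estimate applies to $S(\la_0)^*$, yielding $\df(S(\la_0))=0$ and hence $0\in\rho(S(\la_0))$.

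Once such $\la_0$ is secured, the rest is routine: $0\in\rho(S(\la_0))$ implies $0\notin\sess(S(\la_0))\cup\spt(S(\la_0))$, so Lemma~\ref{lem:reg.field} yields $\la_0\in\Pi(T)$ and Theorem~\ref{thm:Schur-complement} yields $\la_0\notin\sess(\cA)$, i.e.\ $\cA-\la_0$ is Fredholm. Lemma~\ref{lem:kernel-smooth}(i) combined with $\Ker(S(\la_0))=\{0\}$ further gives $\nl(\cA-\la_0)=0$, so $\df(\cA-\la_0)<\infty$. Invoking \cite[Theorem~III.3.1]{EE87} then concludes the argument.

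The main obstacle is the coercive estimate leading to $0\in\rho(S(\la_0))$. The strong ellipticity~\eqref{strong-ellipticity} in Assumption~\ref{ass:C} controls the top-order coefficient, but one must carefully verify that the rotation $\theta_{\la_0}$, the ellipticity constant $\delta_{\la_0}$, and the lower-order bound $K_{\delta_{\la_0}}$ appearing in the proof of Lemma~\ref{lem:num.range} behave compatibly as $\la_0=\ii t\to\infty$ along the ray, so that the $-\la_0$ contribution in $p_0(\mydot,\la_0)$ dominates the remaining lower-order terms (absorbable via Erhling's inequality) and pushes the numerical range of $S(\la_0)$ strictly off the origin.
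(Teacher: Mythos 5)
Your plan misapplies the key abstract tool and, in doing so, skips the hardest step of the proof. The identity from \cite[Theorem~III.3.1]{EE87} for a closed restriction $T$ of $\cA$ at a point $\la_0\in\Pi(T)$ with $\nl(\cA-\la_0),\df(\cA-\la_0)<\infty$ reads
\[
\dim\Dom(\cA)/\Dom(T)=\nl(\cA-\la_0)+\df(T-\la_0)-\df(\cA-\la_0),
\]
not the inequality you wrote; the quantity $\nl(T-\la_0)$ vanishes automatically once $\la_0\in\Pi(T)$ and carries no information. To conclude finiteness you must additionally prove $\df(T-\la_0)<\infty$, and membership in the regularity field does not give this: $T-\la_0$ is injective with closed range, but its range may still have infinite codimension. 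This is exactly the step on which the paper's proof spends most of its effort: it writes $\df(T-\la_0)=\sum_{j}\nl(\cB_{I_j}^*-\ov{\la_0})$, bounds each summand by $\nl(S_{I_j}(\la_0)^*)$ via the adjoint-kernel computation of Proposition~\ref{prop:ALMS-O} (see \eqref{prop:ALMS-0-find.dim.last.step.1}), and then uses that $S(\la_0)$ is Fredholm and a finite-dimensional extension of $S_{I_1}(\la_0)\oplus S_{I_2}(\la_0)\oplus S_{I_3}(\la_0)$ to conclude that each $\nl(S_{I_j}(\la_0)^*)$ is finite. Without this ingredient your argument does not close.

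Second, your method of producing $\la_0$ is more fragile than what is actually needed. Assumption~\ref{ass:C} and Lemma~\ref{lem:num.range} provide constants $\theta_\la$, $\delta_\la$, $K_{\delta_\la}$, $\gamma_\la$ that depend on $\la$ with no uniformity whatsoever, so there is no control of the coercivity constant as $\la_0=\ii t\to\infty$; moreover the sets ${\cl}\{\Delta(\RR)\}$ and ${\cl}\{d(\RR)\}$ are not assumed bounded and need not avoid the imaginary axis. The paper sidesteps all asymptotics in $\la$: it fixes an arbitrary $\la\notin{\cl}\{\Delta(\RR)\}\cup{\cl}\{d(\RR)\}$, uses Lemma~\ref{lem:num.range} to choose $\omega$ outside the closed numerical range of $S(\la)$ (so that $0\notin\sess(S(\la)-\omega)\cup\spt(S(\la)-\omega)$), and then works with $\cA_\omega$, obtained by subtracting $\omega$ from the $(1,1)$-entry, together with $S_\omega(\la)=S(\la)-\omega$; these are bounded perturbations leaving all domains unchanged. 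I recommend adopting that device and then supplying the deficiency estimate for $T_\omega-\la$ as described above.
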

\begin{proof}
For a given $\la\notin {\cl}\{\Delta(\RR)\} \cup {\cl}\{d(\RR)\}$, Lemma~\ref{lem:num.range} guarantees that there exists $\omega\in\CC$ such that 
\begin{align}\label{toshmat-S1}
\omega\notin\sess(S(\la))\cup\spt(S(\la)).
\end{align}
Since the closedness is preserved under bounded perturbations, the operators
	\begin{align*}
	\cA_{\omega} := \cA-\begin{pmatrix} 
	                    \omega  & 0 \\[1ex]
	                                 0 & 0 
	                    \end{pmatrix}, 
	                    \quad 
	                    \Dom(\cA_\omega) :=  \Dom(\cA),
	\end{align*}
and
	\begin{align*}
	S_\omega(\la) := S(\la)-\omega, \quad \Dom(S_\omega(\la)) := \Dom(S(\la))
	\end{align*}
are closed. In the sequel we will work with the following closed operators
	\begin{align*}
	&\cB_{I_j,\omega} := \cB_{I_j}-
							\begin{pmatrix} 
							\omega  & 0 \\ 
							0 & 0 
							\end{pmatrix},
							&&\Dom(\cB_{I_j,\omega}) :=  \Dom(\cB_{I_j}), &&\\
	&S_{I_j,\omega}(\la) := S_{I_j}(\la)-\omega,  &&\Dom(S_{I_j,\omega}(\la)) := \Dom(S_{I_j}(\la)), &&j=1,2,3.
	\end{align*}
By \eqref{toshmat-S1}, it is clear that $0\notin\sess(S_\omega(\la))\cup\spt(S_\omega(\la))$ and hence Lemma~\ref{lem:reg.field} implies that $\la\in\Pi(T_\omega)$, where 
\begin{equation}
T_\omega:=\cB_{I_1,\omega} \oplus \cB_{I_2,\omega} \oplus \cB_{I_3,\omega}, \quad \Dom(T_\omega):=\Dom(T). 
\end{equation}
In particular, this means that $\Pi(T_\omega)\neq\emptyset$. 
Since $S_\omega(\la)$ is Fredholm, it follows from Lemma~\ref{lem:kernel-smooth} and Corollary~\ref{cor:kernel-smooth}, that the quantities $\nl(\cA_\omega-\la)$ and $\df(\cA_\omega-\la)$ are both finite. 
Hence if we show that $\df(T_\omega-\la)<\infty$, then \cite[Theorem~III.3.1]{EE87} yields  
	\[
	\dim\Dom(\cA_\omega)/\Dom(T_\omega)=\nl(\cA_\omega-\la) + \df(T_\omega-\la) - \df(\cA_\omega-\la)<\infty,
	\]
and the claim of the proposition immediately follows as $\Dom(\cA)=\Dom(\cA_\omega)$ and $\Dom(T)=\Dom(T_\omega)$.

To this end, first note that $\df(T_\omega-\la) = \nl(T_\omega^*-\ov{\la})$ and 
	\begin{align}\label{Appendix-eshmat-1}
	\nl(T_\omega^*-\ov{\la}) = \nl(\cB_{I_1,\omega}^*-\ov{\la})+\nl(\cB_{I_2,\omega}^*-\ov{\la}) + \nl(\cB_{I_3,\omega}^*-\ov{\la}).
	\end{align}
It follows as in the proof of Proposition~\ref{prop:ALMS-O} that
	\begin{align}\label{Appendix-eshmat-2}
	\nl(\cB_{I_j,\omega}^*-\ov{\la}) &\leq \nl(S_{I_j,\omega}(\la)^*), \quad j=1,2,3,
	\end{align}
see \eqref{prop:ALMS-0-find.dim.last.step.1}. However, we know that $S_\omega(\la)$ is Fredholm and it is a finite dimensional extension\footnote{\,In fact, it is a $2m$-dimensional extension.} of the orthogonal sum $S_{I_1,\omega}(\la)\oplus S_{I_2,\omega}(\la)\oplus S_{I_3,\omega}(\la)$, see \cite[Section~IX.9]{EE87}. 
Therefore, the latter is also Fredholm and hence by the relations in \eqref{Appendix-eshmat-2} and \eqref{Appendix-eshmat-1}, we immediately get $\df(T_\omega-\la)<\infty$.
\end{proof}

\subsection{Proof of Theorem~\rm{\ref{thm:reg.part}}}
Let $z\!\in\!\Delta(\RR)$ be arbitrary. Then there exist $a,b\in\RR$ such that $z\in\Delta([a,b])$. Let $I_1=(-\infty,a]$, $I_2=[a,b]$, $I_3=[b,\infty)$ and let $T$, $\cB_{I_j}$ be the operators defined as in Subsection~\ref{sub.sec:mor.aux.results}. By Proposition~\ref{prop:fin.dim.ext}, the operator $\cA$ is a finite-dimensional extension of $T$. Since the essential spectrum is invariant with respect to finite-dimensional extensions, see  \cite[Corollary IX.4.2]{EE87}, we have $\sess(\cA) = \sess(T)$. Therefore, using \eqref{ess.spec.cB}, we obtain   
	\begin{align*}
	\Delta([a, b]) = \sess(\cB_{I_2}) \subset \sess(\cB_{I_1}) \cup \sess(\cB_{I_2}) \cup \sess(\cB_{I_3}) =\sess(T) \subset\sess(\cA).
	\end{align*}
Hence $\Delta(\RR)\subset\sess(\cA)$ and the claimed inclusion follows from the closedness of the essential spectrum.\qed

\section{Main result: analytic description of the essential spectrum}\label{sec:sharpened}

In this section we derive an explicit description of the essential spectrum of the closure $\cA$ of $\cA_0$ up to the set $\Lambda_\infty(d)$ defined to be the set of limit points at $\infty$ of the function $d:\RR\to\RR$ in \eqref{diff.expressions},
\begin{align}\label{Lambda-infty}
\Lambda_\infty(d) := \bigl\{\la\in\CC:\, \exists \, \{x_n\}_{n=1}^\infty\subset\RR \; {\rm{s.t.}} \; x_n\to\infty,\; d(x_n)\to\la, \; n\to\infty\bigr\}.
\end{align}

In our result a crucial role is played by the following assumption on the coefficients $p_j(\mydot,\la)$, $j\in\{0,1,2,\ldots,m\}$, of the Schur complement defined in \eqref{Schur.coeff}, which are formed out of the coefficients of the operator matrix $\cA_0$ in \eqref{A0}. 

\begin{ass}\label{ass:D} 
For every $\la \notin {\cl}\{\Delta(\RR)\}\cup 
	\Lambda_\infty(d)$ the following limits exist and are finite, 
	\begin{align}\label{coeff:aymp.const}
	p_j^{\pm}(\la) := \lim_{x\to\pm\infty}\frac{p_j(x,\la)}{p_m(x,\la)} , 
	\quad j=0,1,\ldots,m-1.
	\end{align}
\end{ass}

\begin{theorem}\label{thm:exp.description.singular}
	Let Assumptions~\ref{ass:A}, \ref{ass:B} and \ref{ass:D} be satisfied, let $\la \notin {\cl}\{\Delta(\RR)\}\cup \Lambda_\infty(d)$, and define the two polynomials 
	\begin{align}
	\label{polynomials:description}
	P_\la^{\pm}(\xi) := \xi^m+\sum_{j=0}^{m-1}p^{\pm}_j(\la)\xi^j, \quad \xi\in\RR.
	\end{align}
	Then $\la\in\sess(\cA)$ if and only if $P_\la^-(\xi)P_\la^+(\xi) = 0$ for some $\xi\in\RR$.
\end{theorem}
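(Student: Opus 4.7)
The starting point is the Schur-complement reduction of Theorem \ref{thm:Schur-complement}, which says that $\la \in \sess(\cA) \Leftrightarrow 0 \in \sess(S(\la))$ provided $\la \notin {\cl}\{\Delta(\RR)\} \cup {\cl}\{d(\RR)\}$. The first step in the plan is to widen the set of admissible $\la$ so that the reduction applies to all $\la \notin {\cl}\{\Delta(\RR)\} \cup \Lambda_\infty(d)$, not merely those outside ${\cl}\{d(\RR)\}$. For such a $\la$, since $d \in C^\infty(\RR,\CC)$ and $\la \notin \Lambda_\infty(d)$, the preimage $d^{-1}(\{\la\})$ is contained in a compact subset $K\subset\RR$. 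I would then smoothly modify $d$ on a neighborhood of $K$ to obtain $\tilde d\in C^\infty(\RR,\CC)$ agreeing with $d$ outside a compact set and with $\la\notin{\cl}\{\tilde d(\RR)\}$. The resulting matrix differential operator $\tilde\cA$ differs from $\cA$ only through the $(2,2)$-entry on a compact set, so $\cA-\tilde\cA$ is a $\tilde\cA$-compact (actually bounded and compactly supported) perturbation and $\sess(\cA)=\sess(\tilde\cA)$; the same modification affects $S(\la)$ through its coefficients $p_j(\cdot,\la)$ only on the compact set, yielding a relatively compact perturbation of $S(\la)$, and in particular does not move the point $0$ into or out of $\sess(S(\la))$. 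Since the limits \eqref{coeff:aymp.const} are unchanged (modification is compactly supported), the associated polynomials $P_\la^\pm$ are also unchanged. Applying Theorem \ref{thm:Schur-complement} to $\tilde\cA$ in place of $\cA$ gives the equivalence $\la\in\sess(\cA) \Leftrightarrow 0\in\sess(S(\la))$ throughout the target region.

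The problem is thereby reduced to characterizing when $0\in\sess(S(\la))$ for the scalar uniformly elliptic $m$-th order operator $S(\la)=\sum_{j=0}^m p_j(\mydot,\la)D^j$. Because $p_m(\cdot,\la)$ is bounded and bounded away from zero by Assumption \ref{ass:B}\ref{ass:Schur-2}, multiplication by $1/p_m(\cdot,\la)$ is a boundedly invertible operator on $L^2(\RR)$, and so $0\in\sess(S(\la))$ if and only if $0\in\sess(\hat S(\la))$, where
\[
\hat S(\la) := D^m + \sum_{j=0}^{m-1} q_j(\mydot,\la) D^j, \qquad q_j(\mydot,\la) := \frac{p_j(\mydot,\la)}{p_m(\mydot,\la)}.
\]
By Assumption \ref{ass:D}, $q_j(x,\la) \to p_j^\pm(\la)$ as $x\to\pm\infty$, so $\hat S(\la)$ is an elliptic operator with asymptotically constant coefficients.

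Next I would invoke a limit-operator/decomposition argument. Fix large $R>0$, decompose $\RR=(-\infty,-R]\cup[-R,R]\cup[R,\infty)$, and replace the lower-order coefficients $q_j(x,\la)$ on each half-line by their asymptotic values $p_j^\pm(\la)$. Since $q_j(x,\la)-p_j^\pm(\la)\to 0$ as $x\to\pm\infty$, each such replacement is a lower-order perturbation whose coefficient vanishes at infinity; pseudo-differential calculus (as already used in Section \ref{sec:ess.spec.due.infty}) together with the Rellich-type compactness of multiplication by a vanishing-at-infinity function on $H^j(\RR)\hookrightarrow L^2(\RR)$ for $j<m$ makes this a relatively compact perturbation of $\hat S(\la)$. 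The compact middle piece $[-R,R]$, with regular boundary conditions, has compact resolvent and therefore contributes nothing to the essential spectrum. Consequently, $\sess(\hat S(\la))=\sess(P_\la^-(D)|_{L^2((-\infty,-R])})\cup \sess(P_\la^+(D)|_{L^2([R,\infty))})$. The constant-coefficient Fourier multipliers $P_\la^\pm(D)$ on a half-line with regular boundary conditions have essential spectrum equal to the range of their symbol, namely $\{P_\la^\pm(\xi):\xi\in\RR\}$ (by a standard Weyl-sequence argument: for $\xi_0$ with $P_\la^\pm(\xi_0)=0$, plane-wave cutoffs $\chi_n(x)e^{\ii\xi_0 x}$ with $\mathrm{supp}\,\chi_n$ translated to $\pm\infty$ yield a singular sequence). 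Combining the two, $0\in\sess(\hat S(\la))$ iff $P_\la^+(\xi_0)=0$ or $P_\la^-(\xi_0)=0$ for some $\xi_0\in\RR$, which is exactly $P_\la^-(\xi)P_\la^+(\xi)=0$ for some $\xi\in\RR$.

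The main obstacle is the first step: making the gluing-and-smoothing modification of $d$ rigorous so that Assumptions \ref{ass:A} and \ref{ass:B} remain valid for the altered operator and so that the perturbations of both $\cA$ and $S(\la)$ are genuinely relatively compact (not merely bounded) — this rests on the fact that differences of the coefficients after the modification are supported on a compact set and hence define multiplication operators whose composition with the Sobolev embedding $H^m_{\mathrm{loc}}\hookrightarrow L^2$ is compact. The limit-operator argument in the second half, though routine in spirit, also needs care when carried out in the uniformly elliptic pseudo-differential setting of Section \ref{sec:ess.spec.due.infty}: one must control the error terms via the parametrix $S^{\rm p}(\la)$ and the symbol estimates in $S^m(\RR^2)$ to ensure compactness of the lower-order perturbations.
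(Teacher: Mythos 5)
Your second half (normalizing $S(\la)$ by $1/p_m(\cdot,\la)$, passing to half--lines, and identifying the essential spectrum of the asymptotically constant coefficient operators with the ranges of $P_\la^{\pm}$) is essentially the paper's Step~1, which invokes \cite[Corollary~IX.9.4]{EE87} for the restrictions $S^{\pm}(\la)$ to $H_0^m(\RR_\pm)$ rather than spelling out a limit--operator argument; that part is sound. The genuine gap is in your first step, i.e.\ the extension from $\la\notin{\cl}\{d(\RR)\}$ to $\la\notin\Lambda_\infty(d)$. The perturbation $\cA-\tilde\cA=\bigl(\begin{smallmatrix}0&0\\0&d-\tilde d\end{smallmatrix}\bigr)$ is \emph{not} relatively compact: multiplication by a non-trivial compactly supported function is never compact on $L^2(\RR)$, and the domain of $\cA$ provides no smoothing whatsoever in the second component (the $(2,2)$ entry has order $0$), so there is no Rellich-type compactness to appeal to. Worse, the conclusion $\sess(\cA)=\sess(\tilde\cA)$ is false in general: by Theorem~\ref{thm:reg.part}, ${\cl}\{\Delta(\RR)\}\subset\sess(\cA)$, and modifying $d$ on a compact set changes $\Delta$ there, so the two essential spectra genuinely differ --- no argument based on $\sess$-preserving perturbations can close this step. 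For the same reason the modification perturbs the \emph{leading} coefficient $p_m(\cdot,\la)=a_m(\Delta-\la)/(d-\la)$ of the Schur complement, not only lower-order terms (and such a perturbation is not compact from $H^m$ to $L^2$ either); moreover, for $\la\in{\cl}\{d(\RR)\}$ the unmodified $S(\la)$ is not even well defined, so the phrase ``does not move $0$ into or out of $\sess(S(\la))$'' has no meaning. Finally, you arrange $\la\notin{\cl}\{\tilde d(\RR)\}$ but never address $\la\notin{\cl}\{\tilde\Delta(\RR)\}$, which is also needed to apply Theorem~\ref{thm:Schur-complement} to the modified operator.

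What is actually required is a \emph{localized} equivalence at the single point $\la$, and this is where the real work of the paper's Step~2 lies. One shows that $\cA$ is a finite-dimensional extension of $\cA_{I_1}\oplus\cA_{I_2}\oplus\cA_{I_3}$ (Proposition~\ref{prop:fin.dim.ext}, resting on the compact-interval result Proposition~\ref{prop:ALMS-O} and the numerical-range Lemma~\ref{lem:num.range}), so that $\sess(\cA)$ splits into three pieces; the middle piece contributes only $\Delta(I_2)\not\ni\la$; and each outer piece is then matched \emph{identically} --- not up to a compact perturbation --- with a half-line restriction of a new whole-line operator $\cB_\varepsilon$ obtained by symmetric reflection and mollification of the coefficients, for which one verifies $\la\notin{\cl}\{d_\varepsilon(\RR)\}\cup{\cl}\{\Delta_\varepsilon(\RR)\}$ before applying Step~1. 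Your proposal replaces this decomposition/gluing machinery with a compactness claim that does not hold, so the first step as written cannot be repaired without importing essentially the paper's construction.
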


The proof of Theorem~\ref{thm:exp.description.singular} will be done in two steps: first for $\la \notin {\cl}\{\Delta(\RR)\}\cup {\cl}\{d(\RR)\}$ and then for those $\la$ such that $\la \in {\cl}\{d(\RR)\}\setminus\bigl({\cl}\{\Delta(\RR)\}\cup \Lambda_\infty(d)\bigr)$. In order to be able to use the result of the first step within the second, we need the following preparations.  

\begin{remark}\label{rem:D.infty}
If $\la\notin\Lambda_\infty(d)$, then there exists $x_\la>0$ such that, with $I_\la:=[-x_\la, x_\la]$, we have $\la\notin {\cl}\{d(\RR\setminus I_\la)\}$ and 
	\[
	\sup\limits_{x\in\RR\setminus I_\la}|(d(x)-\la)^{-1}|<\infty.
	\]
\end{remark}

Let $\la \notin \Lambda_\infty(d)$ be fixed and $x_\la$ be as in Remark~\ref{rem:D.infty}. Take $\varphi\in C_0^{\infty}(\RR)$ such that $\varphi(x) = 1$ for $|x-x_\la|\leq 1/4$ and $\varphi(x) = 0$ for $|x-x_\la|\geq 1/2$. Let $\psi\in C^{\infty}([x_\la,\infty), \CC)$ be given. Reflect the graph of $\psi$ with respect to the vertical axis $x=x_\la$ and let thus obtained graph correspond to a function $\psi_{\rm{ext}}$: 
\[
\psi_{\rm{ext}}(x) := \psi(x_\la+|x-x_\la|), \quad x\in\RR.
\]
For $\varepsilon>0$, set
\begin{align*} 
\psi_{\varepsilon}(x) := J_{\varepsilon}\ast(\varphi\psi_{\rm{ext}})(x) + ((1-\varphi)\psi_{\rm{ext}})(x), \quad x\in\RR,
\end{align*}
where 
\[
J_\varepsilon(x) := \frac{1}{\varepsilon}J\Bigl(\frac{x}{\varepsilon}\Bigr), \quad x\in\RR
\]
with a non-negative real-valued function $J\in C_0^{\infty}(\RR)$ such that 
\[
J(x) = 0 \quad \text{for} \quad |x|\geq 1, \quad \int_{\RR}J(x) \, \d x=1. 
\]
We call $\psi_{\rm{ext}}$ and $\psi_\varepsilon$, respectively, ``symmetric extension with respect to the vertical axis $x=x_\la$'' and ``extended $\varepsilon$-regularization'' of $\psi$. It is not difficult to see that $\psi_\varepsilon$ has the following properties
\begin{itemize}
	\item $\psi_{\varepsilon}\in C^{\infty}(\RR,\CC)$;
	\item $\psi_\varepsilon(x)=\psi_{\rm{ext}}(x)$, for all $x$ satisfying $|x-x_\la|\geq \varepsilon+1/2$;
	\item $\psi_\varepsilon$ uniformly converges to $\psi_{\rm{ext}}$ as $\varepsilon\searrow0$ on $\RR$.
\end{itemize}

\smallskip
\noindent
\textit{Proof of Theorem~{\rm{\ref{thm:exp.description.singular}}}}. \textbf{Step 1.} Let $\la \in \CC \setminus \big({\cl}\{\Delta(\RR)\}\cup {\cl}\{d(\RR)\} \big)$ be arbitrary. By Theorem~\ref{thm:Schur-complement}, we have
	\begin{align}
	\label{sess-equiv}
	\la \in \sess(\cA) \iff \ 0 \in \sess(S(\la)).
	\end{align}
Denote by $S^+(\la)$ and $S^{-}(\la)$ the restrictions of $S(\la)$ to the Sobolev spaces $H_0^{m}(\RR_+)$ and $H_0^{m}(\RR_-)$, respectively. Since $S(\la)$ is a finite dimensional extension of the orthogonal sum $S^{+}(\la)\oplus S^{-}(\la)$, we have  
	\begin{align}\label{sess-equiv-1}
	\la\in\sess(\cA) \quad \Longleftrightarrow \quad 0 \in \sess(S(\la))=\sess(S^{+}(\la))\cup\sess(S^{-}(\la)).
	\end{align}
It follows by Assumptions~\ref{ass:A} and \ref{ass:D} that the differential operator 
	\begin{equation}\label{S_1}
	S^+_1(\la) := \frac{1}{p_m(\mydot,\la)} S^{+}(\la) = P_\la^{+}\Bigl(-\ii\dx\Bigr), \quad \Dom(S_1(\la))=H_0^m(\RR_+)
	\end{equation}
satisfies all the conditions of \cite[Corollary~IX.9.4]{EE87}. Therefore \cite[(9.19)]{EE87} for $\sigma_{{\rm{e}}k}$ with $k=3$ applies and yields
	\begin{equation}\label{p-h-1}
	\sess\bigl(S^+_1(\la)\bigr) = \left\{P_\la^{+}(\xi) : \xi\in\RR\right\}.
	\end{equation}
Since $|p_m(\mydot,\la)|$ is uniformly positive on $\RR$, \eqref{S_1} and \eqref{p-h-1} imply that
	\begin{align}\label{ess.spec.S+}
	\sess(S^{+}(\la)) = \sess\bigl(S^+_1(\la) \bigr) =\left\{P_\la^{+}(\xi) : \xi\in\RR\right\}.
	\end{align}
In the same way we obtain 
		\begin{align}\label{ess.spec.S-}
		\sess(S^{-}(\la)) = \sess (S^-_1(\la))=\left\{P_\la^{-}(\xi) : \xi\in\RR\right\},
		\end{align}	
if we use the unitary transformation
	\[
	U:L^2(\RR_-) \to L^2(\RR_+), \quad (U\phi)(x):=\phi(-x).
	\]
Altogether, from \eqref{ess.spec.S+}, \eqref{ess.spec.S-} and \eqref{sess-equiv-1} the claim follows.

\smallskip
\noindent
\textbf{Step 2.}
Let $\la \in {\cl}\{d(\RR)\}\setminus\bigl({\cl}\{\Delta(\RR)\}\cup \Lambda_\infty(d)\bigr)$ be arbitrary and $x_\la>0$ be chosen as in Remark~\ref{rem:D.infty}. Let $I_1=(-\infty,-x_\la-1]$, $I_2=[-x_\la-1,x_\la+1]$, $I_3=[x_\la+1,\infty)$ and denote by $\cA_{I_j}$ the closures of the restrictions of $\cA_0$ to $C_0^\infty(I_j) \oplus C_0^\infty(I_j)$, $j=1,2,3$. Then it follows by  Proposition~\ref{prop:fin.dim.ext} that $\cA$ is a finite-dimensional extension of the orthogonal sum $\cA_{I_1}\oplus\cA_{I_2}\oplus\cA_{I_3}$ and hence by \cite[Corollary IX.4.2]{EE87}, we have
\begin{align}\label{improvement:decomposition}
\sess(\cA)=\sess(\cA_{I_1})\cup\sess(\cA_{I_2})\cup\sess(\cA_{I_3}).
\end{align}
By Proposition~\ref{prop:ALMS-O}, we have   
\begin{align}\label{improvement:Langer-1}
\sess(\cA_{I_2})= \Delta(I_2).
\end{align}
Since $\la \notin {\cl}\{\Delta(\RR)\} \supset \Delta(I_2)$, we therefore have by \eqref{improvement:decomposition} and \eqref{improvement:Langer-1},
\[
\la\in\sess(\cA) \quad \Longleftrightarrow \quad \la\in\sess(\cA_{I_1})\cup\sess(\cA_{I_3}).
\]
In the sequel, we show  that 
\begin{align}\label{relation:I_3}
\la\in \sess(\cA_{I_3}) \quad \Longleftrightarrow  \quad P^+_\la(\xi)=0 \quad \text{for some} \quad \xi\in\RR.
\end{align}

Note that the restrictions to $I_3$ of the function $\Delta$ in \eqref{defDelta} as well as of the coefficient functions of $\cA_0$ in \eqref{A0} are all $C^\infty$-functions. Below we will work with their symmetric extensions with 
respect to the vertical axis $x=x_\la$ and also with the extended $\varepsilon$-regularizations. 

Observe that, for $\varepsilon\in(0, 1/2)$,
\[
\Delta_\varepsilon([x_\la+1,\infty))=\Delta_{\rm{ext}}([x_\la+1,\infty))=\Delta([x_\la+1,\infty)).
\]
Next, we prove that, for some $\varepsilon^*>0$,  
\begin{align}\label{lem:reg.epsilon}
\la \notin {\cl}\{\Delta_\varepsilon(\RR)\} \cup {\cl}\{d_\varepsilon(\RR)\}, \quad \varepsilon\in(0,\varepsilon^*).
\end{align}
To this end, recall that $\la\!\notin\!{\cl}\{d((-\infty,-x_\la)\cup(x_\la,\infty))\}$ by Remark~\ref{rem:D.infty}. Therefore, $\la\notin{\cl}\{d_{\rm{ext}}(\RR)\}$ and hence there exists 
$\delta>0$ such that $\dist(\la,d_{\rm{ext}}(\RR)) \geq \delta$. 

On the other hand, since $d_\varepsilon$ converges to $d_{\rm{ext}}$ uniformly as $\varepsilon\searrow 0$ on $\RR$, there exists $\varepsilon_1>0$ such that $|d_\varepsilon(x)-d_{\rm{ext}}(x)|<\delta/2$ for all $x\in\RR$ whenever $\varepsilon\in(0,\varepsilon_1)$. Therefore, by the triangle inequality, we have
\begin{align*}
|d_\varepsilon(x)-\la| \geq |d_{\rm{ext}}(x)-\la|-|d_\varepsilon(x)-d_{\rm{ext}}(x)|\geq \delta/2, \quad x\in\RR, \quad \varepsilon\in(0,\varepsilon_1).
\end{align*}
Hence $\dist\bigl(\la, {\cl}\{d_\varepsilon(\RR)\}\bigr) \geq \delta/2$, that is, $\la\notin{\cl}\{d_\varepsilon(\RR)\}$ whenever $\varepsilon\in(0,\varepsilon_1)$. The proof of $\la\notin {\cl}\{\Delta_\varepsilon(\RR)\}$, for all $\varepsilon\in(0,\varepsilon_2)$ with some $\varepsilon_2>0$, is the 
same and follows from $\la \notin {\cl}\{\Delta(\RR)\}$ using the fact that $\Delta_\varepsilon(x)$ converges to $\Delta_{\rm{ext}}(x)$ uniformly as $\varepsilon\searrow 0$ on $\RR$. Consequently, \eqref{lem:reg.epsilon} holds with $\varepsilon^*=\min\{\varepsilon_1, \varepsilon_2\}$.

Now take any $\varepsilon>0$ such that $\varepsilon<\min\{\varepsilon^*, 1/2\}$ and consider the operator matrix 
\begin{equation}\label{A01}
\begin{aligned} 
&\cB_{0,\varepsilon}  := 
\left(\begin{array}{cc} 
\ds\sum_{\alpha=0}^{m} a_{\alpha,\varepsilon}D^{\alpha}\: & \ds\sum_{\beta=0}^{n} b_{\beta,\varepsilon}D^{\beta}\\[3.5ex]
\ds\sum_{\gamma=0}^{k} c_{\gamma,\varepsilon}D^{\gamma} \: & D_\varepsilon  
\end{array}\right), \\[1.5ex]
&\Dom(\cB_{0,\varepsilon})  :=  C_0^\infty(\RR) \oplus C_0^\infty(\RR),
\end{aligned}
\end{equation}
where $a_{\alpha,\varepsilon}$, $b_{\beta,\varepsilon}$, $c_{\gamma,\varepsilon}$ and $d_\varepsilon$ stand for the extended $\varepsilon$-regularizations of the coefficients $a_\alpha$, $b_\beta$, $c_\gamma$ and $d$, respectively of $\cA_0$. We denote the closure of $\cB_{0,\varepsilon}$ by $\cB_{\varepsilon}$. Moreover, we denote by $\cB_{\varepsilon,I_j}$ the closures of the restrictions of $\cB_{\varepsilon,0}$ to $C_0^\infty(I_j) \oplus C_0^\infty(I_j)$, $j=1,2,3$. 

It follows by Proposition~\ref{prop:fin.dim.ext} that $\cB_{\varepsilon}$ is a finite-dimensional extension of the orthogonal sum $\cB_{\varepsilon,I_1}\oplus\cB_{\varepsilon,I_2}\oplus\cB_{\varepsilon,I_3}$ and hence 
by \cite[Corollary IX.4.2]{EE87},
\[
\sess(\cB_{\varepsilon})=\sess(\cB_{\varepsilon, I_1})\cup\sess(\cB_{\varepsilon, I_2})\cup\sess(\cB_{\varepsilon, I_3}).
\]
By Proposition~\ref{prop:ALMS-O}, we have $\sess(\cB_{\varepsilon, I_2})=\Delta_\varepsilon(I_2)$. On the other hand, \eqref{lem:reg.epsilon} yields $\la\notin{\cl}\{\Delta_\varepsilon(\RR)\} \supset {\cl}\{\Delta_\varepsilon(I_2)\}$ and thus
\begin{align}\label{eshmat-gluing}
\la\in\sess(\cB_\varepsilon) \quad \Longleftrightarrow \quad \la\in\sess(\cB_{\varepsilon, I_1})\cup\sess(\cB_{\varepsilon, I_3}). 
\end{align}
Observe that by our construction $\cA_{I_3}\equiv\cB_{\varepsilon,I_3}$ and that $\cB_{\varepsilon,I_1}$ and $\cB_{\varepsilon,I_3}$ are unitarily equivalent by the unitary transformation 
\[
U:L^2(\RR)\to L^2(\RR), \quad (U\phi)(x) := \phi(-x).
\]
Therefore, $\sess(\cA_{I_3})=\sess(\cB_{\varepsilon, I_1})=\sess(\cB_{\varepsilon, I_3})$ and hence \eqref{eshmat-gluing} implies that 
\begin{align}\label{A-B_e}
\la\in\sess(\cB_\varepsilon) \quad \Longleftrightarrow \quad \la\in \sess(\cA_{I_3}).
\end{align}
Due to \eqref{lem:reg.epsilon} it is easy to see that the operator $\cB_{\varepsilon}$ satisfies all the hypotheses of the first step. In particular, the limits 
\begin{align}\label{coeff:aymp.const.varepsilon}
	p_{j,\varepsilon}^{\pm}(\la) := \lim_{x\to\pm\infty}\frac{p_{j,\varepsilon}(x,\la)}{p_{m,\varepsilon}(x,\la)} , 
	\quad j=0,1,\ldots,m-1,
\end{align}
corresponding to \eqref{coeff:aymp.const} for $\cB_\varepsilon$ exist, where $p_{j,\varepsilon}(\mydot,\la)$, $j\in\{0,1,\ldots,m\}$, stand for the coefficients of the first Schur complement of $\cB_\varepsilon$. By the construction of the extended $\varepsilon$-regularization, we clearly have
\begin{align}
p_{j,\varepsilon}^{\pm}(\la)=p_j^{+}(\la), \quad j=0,1,\ldots,m-1.
\end{align}
Therefore, we conclude from Step 1 that
\begin{align}\label{B_e}
\la\in\sess(\cB_{\varepsilon}) \quad \Longleftrightarrow \quad P^+_\la(\xi)=0 \quad \text{for some} \quad \xi\in\RR.
\end{align}
Now the claim in \eqref{relation:I_3} follows from \eqref{B_e} and \eqref{A-B_e}. 

On the other hand, applying the same arguments as above, we obtain 
\[
\la\in \sess(\cA_{I_1}) \quad \Longleftrightarrow  \quad P^-_\la(\xi)=0 \quad \text{for some} \quad \xi\in\RR,
\]
completing the proof of the theorem.\qed

\medskip

From Theorems~\ref{thm:reg.part} and \ref{thm:exp.description.singular} we come to the following conclusion, which is the main result of the paper.

\begin{theorem}\label{thm:main.result}
	Let Assumptions~\ref{ass:A}, \ref{ass:C} and \ref{ass:D} be satisfied. Then for the closure $\cA$ of the operator $\cA_0$ in \eqref{A0}, we have
	\begin{align}
	\sess(\cA)  = \sessreg(\cA) \cup \sesssing(\cA),
	\end{align}
	where $\sessreg(\cA):={\cl}\{\Delta(\RR)\}$, and
	\begin{align*}
	\sesssing(\cA)\setminus \Lambda_\infty(d) := \bigl\{\la\!\in\!\CC\!\setminus\!\bigl({\cl}\{\Delta(\RR)\} \cup \Lambda_\infty(d)\bigr): \exists\, \xi\in\RR \;\, {\rm{s.t.}} \;\, P^+_\la(\xi)P^-_\la(\xi) = 0\bigr\}.
	\end{align*}
\end{theorem}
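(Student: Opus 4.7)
The plan is to assemble the decomposition directly from Theorem~\ref{thm:reg.part} and Theorem~\ref{thm:exp.description.singular}, with one small preliminary compatibility check. First I would observe that Assumption~\ref{ass:C} implies Assumption~\ref{ass:B}: the strong ellipticity bound $\Re(\mathrm{e}^{\ii\theta_\la}p_m(x,\la))\geq\delta_\la>0$ forces $|p_m(x,\la)|\geq\delta_\la$ on $\RR$, so that $1/p_m(\mydot,\la)$ is bounded and hence \ref{ass:Schur-2} is satisfied. Thus both auxiliary theorems are available under the hypotheses of Theorem~\ref{thm:main.result}.

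For the regular part, I would apply Theorem~\ref{thm:reg.part} together with the determinant identity \eqref{det-M}: since $a_m\neq 0$ on $\RR$ by Assumption~\ref{ass:A}, the Douglis--Nirenberg ellipticity of $\cA-\la$ breaks down exactly on $\Delta(\RR)$. Consequently $\Delta(\RR)\subset\sess(\cA)$, and because the essential spectrum of a closed operator is closed, one concludes $\sessreg(\cA):=\cl\{\Delta(\RR)\}\subset\sess(\cA)$.

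For the singular part, for every $\la\notin\cl\{\Delta(\RR)\}\cup\Lambda_\infty(d)$ I would invoke Theorem~\ref{thm:exp.description.singular} directly: $\la\in\sess(\cA)$ if and only if $P^+_\la(\xi)P^-_\la(\xi)=0$ for some $\xi\in\RR$. This produces precisely the set $\sesssing(\cA)\setminus\Lambda_\infty(d)$ as defined in the statement. Taking the union with the regular part then yields the claimed decomposition of $\sess(\cA)$ modulo the undescribed set $\Lambda_\infty(d)$.

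There is essentially no new obstacle here: the substantive analytic work has already been done, namely the pseudo-differential reduction to the Schur complement for the singularity at infinity in Section~\ref{sec:ess.spec.due.infty} and the Glazman-type finite-dimensional extension argument combined with the compact-interval result of \cite{ALMS94} for the ellipticity branch in Section~\ref{sec:DN-non-ellipticity}. The only bookkeeping point is that the two descriptions partition $\CC\setminus\Lambda_\infty(d)$ along the dichotomy ``$\la\in\cl\{\Delta(\RR)\}$'' versus ``$\la\notin\cl\{\Delta(\RR)\}$'', so no overlap argument is needed and the union is unambiguous.
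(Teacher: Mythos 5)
Your proposal is correct and follows essentially the same route as the paper, which derives Theorem~\ref{thm:main.result} directly by combining Theorems~\ref{thm:reg.part} and \ref{thm:exp.description.singular} along the dichotomy $\la\in{\cl}\{\Delta(\RR)\}$ versus $\la\notin{\cl}\{\Delta(\RR)\}$, with the description left open on the exceptional set $\Lambda_\infty(d)$. Your explicit observation that the strong-ellipticity bound in Assumption~\ref{ass:C} yields $|p_m(x,\la)|\geq\delta_\la$ and hence \ref{ass:Schur-2}, so that Assumption~\ref{ass:B} holds and Theorem~\ref{thm:exp.description.singular} is indeed applicable, is a compatibility check the paper leaves tacit.
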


\begin{example}\label{example}

In the Hilbert space $L^2(\RR) \oplus L^2(\RR)$, we consider the matrix differential operator

\begin{equation*}\label{ex:A0}
\begin{aligned} 
& \cA_0  := \left(\begin{array}{cc} 
D^4+\ii D^2+\frac{x^2}{x^2+1}& \:D+\frac{\cos(x)}{\sqrt{1+x^2}}\\[5ex]
\ii D^3+\frac{x^2}{\ii+x^2} & \:{\rm{e}}^{-x^2/2}+\frac{\ii}{1+x^2} 
\end{array}\right), \quad \Dom(\cA_0) :=  C_0^\infty(\RR) \oplus C_0^\infty(\RR),
\end{aligned}
\end{equation*}
where as in \eqref{diff.expressions}, $D$ stands for $-\ii\,{\rm{d}}/{\rm{d}}x$. Clearly, $\Lambda_\infty(d)=\{0\}$, see \eqref{Lambda-infty}, and it is easy to check that Assumptions~\ref{ass:A}, \ref{ass:C} and \ref{ass:D} are satisfied and Theorem~\ref{thm:main.result} can be applied. The function $\Delta:\RR\to\CC$ defined in \eqref{defDelta} is given by
\[
\Delta(x)= {\rm{e}}^{-x^2/2}-\frac{\ii x^2}{1+x^2}, \quad x\in\RR.
\]
Observe that $-\ii\in{\cl}\{\Delta(\RR)\}$ since $\lim_{|x|\to\infty}\Delta(x)=-\ii$. Moreover, the polynomials in \eqref{polynomials:description} are given by
\[
P^{\pm}_\la(\xi) = \xi^4+\frac{\ii\la}{\ii+\la}\xi^2+\frac{1}{\ii+\la}\xi+\frac{\la-\la^2}{\ii+\la}, \quad \la\in\CC\setminus\{-\ii\}.
\]
Theorem~\ref{thm:main.result} yields that, for the closure $\cA$ of $\cA_0$,
\begin{equation}\label{sess-in-example}
\begin{aligned}
\sess(\cA) = & \bigl\{{\rm{e}}^{-x^2/2}-\ii x^2/(1+x^2) : \: x\in\RR\bigr\} \\
& \cup \bigl\{\la\!\in\!\CC\!\setminus\!\{-\ii\} \,:\: \exists\,\xi\in\RR, (\ii+\la)\xi^4+\ii\la\xi^2+\xi+\la-\la^2=0 \bigr\}. 
\end{aligned}
\end{equation}
The essential spectrum is shown in the Figure~\ref{fig:total}. Here the blue curves correspond to the branch of the essential spectrum due to the singularity at infinity (second set in \eqref{sess-in-example}) while the red one corresponds to the essential spectrum due to the violation of the ellipticity in the sense of Douglis and Nirenberg (first set in \eqref{sess-in-example}). Observe that the exceptional set $\Lambda_\infty(d)=\{0\}$ is contained in the singular part of the essential spectrum. Moreover, one can show that the second curve in blue in the right upper quadrant is unbounded and extends to infinity. 

\begin{figure}[h!]
	\centering
	\fbox{\includegraphics[width=0.45\textwidth]{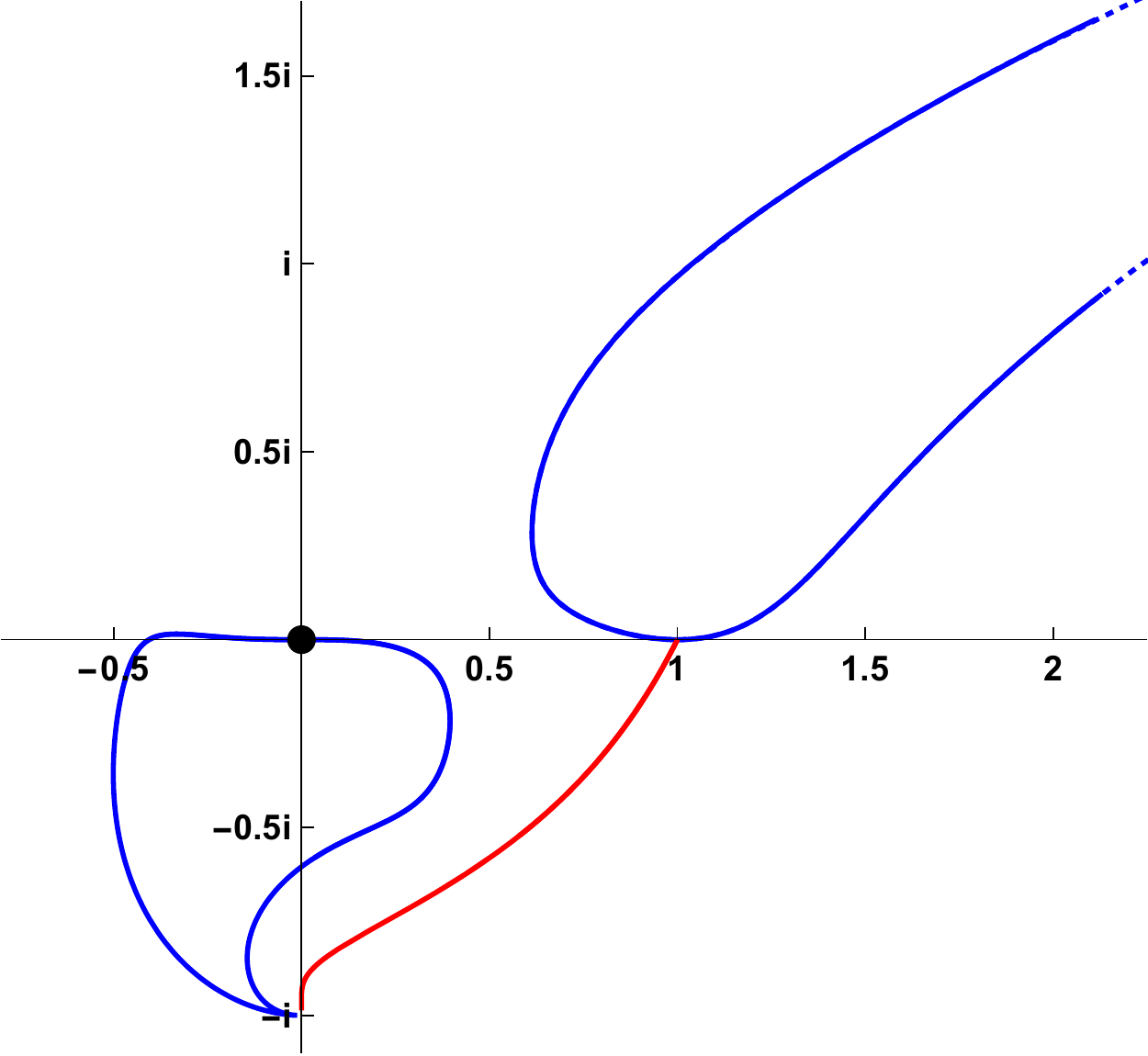}}
	\caption{\small{Essential spectrum of the closure of the operator $\cA_0$ in Example~\ref{example}: blue part caused by singularity at infinity and red part due to violation of Douglis-Nirenberg ellipticity.}}\label{fig:total}
\end{figure}
\end{example}

The following example demonstrates that the two parts of the essential spectrum can be disjoint\footnote{\,The question whether the two parts of the essential spectrum are always adjoined to each other was asked by Prof. Pavel Kurasov at the conference ``Spectral Theory and Applications'', Stockholm, March, 2016. We note that the answer was always affirmative in the models of the previous studies.}.
\begin{example}
In the Hilbert space $L^2(\RR) \oplus L^2(\RR)$, we consider the matrix differential operator
\begin{equation}
\begin{aligned} 
\ds\cA_{0} & := \left(\begin{array}{cc} 
\ds -\frac{\d^2}{\d x^2}& \ds-\dx\\[3ex]
\ds\dx & \ds\:-x^2 
\end{array}\right),
\end{aligned} \quad \Dom(\cA_{0}) :=  C_0^\infty(\RR) \oplus C_0^\infty(\RR).
\end{equation}
Clearly, $\Lambda_\infty(d)=\emptyset$ and it is easy to check that Assumptions~\ref{ass:A}, \ref{ass:C}, \ref{ass:D} are satisfied. The function $\Delta$ in \eqref{defDelta} is given by $\Delta(x)=-x^2-1$, $x\in\RR$, and hence the Douglis-Nirenberg ellipticity of $\cA_0-\la$ is violated if and only if $\la\in(-\infty, -1]$. The coefficients of the Schur complement 
in \eqref{Schur.coeff} are given by 
\begin{align*}
p_0(x,\la) = -\la, \quad p_1(x,\la) = -\frac{2\ii x}{(x^2+\la)^2}, \quad p_2(x,\la) = 1+\frac{1}{x^2+\la}
\end{align*} 
for $x\in\RR$, $\la\in\CC\setminus\RR_-$. It is easy to check that all assumptions of Theorem~\ref{thm:main.result} are satisfied, in particular, 
\[
\frac{p_0(x,\la)}{p_2(x,\la)}=-\la+{\rm{o}}(1), \quad \frac{p_1(x,\la)}{p_2(x,\la)}={\rm{o}}(1), \quad |x|\to \infty,
\]
and thus the polynomials $P_\la^\pm(\xi)$ in \eqref{polynomials:description} are given by $
P_\la^\pm(\xi)=\xi^2-\la$, $\xi\in\RR$. Therefore, Theorem~\ref{thm:main.result} yields
 \[
\sess(\cA)=(-\infty, -1] \cup [0, \infty)
\]
for the closure $\cA$ of $\cA_0$. Obviously, the two parts of the essential spectrum are not adjoined to each other.
\end{example}


\medskip
\noindent
{\bf Acknowledgements.} \ I gratefully acknowledge the support of the \emph{Swiss National Science Foundation}, SNF, grant no.\ $200020\_146477$. This work was done during my PhD studies at the Institute of Mathematics in Bern, Switzerland. I am deeply grateful to Prof. Dr. Christiane Tretter and Dr. Petr Siegl for many stimulating discussions and very helpful suggestions.

\hfill

{\small
	\bibliographystyle{acm}
	\bibliography{ref_NSA}
}

\end{document}